\newcommand{\bdry}[1]{\partial #1}
\newcommand{\bgset}[1]{\big\{#1\big\}}
\newcommand{\A}{{\cal A}}
\newcommand{\D}{{\cal D}}
\newcommand{\F}{{\cal F}}
\newcommand{\dint}{\ds{\int}}
\newcommand{\dist}[2]{\text{dist}\, (#1,#2)}
\newcommand{\ds}[1]{\displaystyle #1}
\newcommand{\eps}{\varepsilon}
\newcommand{\incl}{\hookrightarrow}
\newcommand{\M}{{\cal M}}
\newcommand{\N}{\mathbb N}
\newcommand{\norm}[2][]{\left\|#2\right\|_{#1}}
\renewcommand{\o}{\text{o}}
\newcommand{\PS}[1]{$(\text{PS})_{#1}$}
\newcommand{\pnorm}[2][]{\if #1'' \left|#2\right|_p \else \left|#2\right|_{#1} \fi}
\newcommand{\R}{\mathbb R}
\newcommand{\RP}{\R \text{P}}
\newcommand{\seq}[1]{\left(#1\right)}
\newcommand{\set}[1]{\left\{#1\right\}}
\newcommand{\vol}[1]{\left|#1\right|}
\newcommand{\Z}{\mathbb Z}
\DeclareMathOperator{\divg}{div}
\newenvironment{enumroman}{\begin{enumerate}

}{\end{enumerate}}
\newtheorem{corollary}{Corollary}[section]
\newtheorem{lemma}[corollary]{Lemma}
\newtheorem{proposition}[corollary]{Proposition}
\newtheorem{theorem}[corollary]{Theorem}
\theoremstyle{definition}
\newtheorem{definition}[corollary]{Definition}
\theoremstyle{remark}
\newtheorem{example}[corollary]{Example}
\newtheorem{remark}[corollary]{Remark}
\numberwithin{equation}{section}
\title{\bf Local and nonlocal critical growth anisotropic quasilinear elliptic systems\thanks{{\em MSC2010:} Primary 35J50, Secondary 35B33, 58E05
\newline \indent\; {\em Key Words and Phrases:} critical growth anisotropic quasilinear elliptic systems, nonlocal systems, multiplicity results, abstract critical point theory, symmetric functionals on product spaces, $\Z_2$-cohomological index}}
\author{\bf Artur Jorge Marinho \& Kanishka Perera\\
Department of Mathematics\\
Florida Institute of Technology\\
150 W University Blvd, Melbourne, FL 32901-6975, USA\\
\em amarinho2024@my.fit.edu \& kperera@fit.edu}
\date{}
\begin{document}

\maketitle

\begin{abstract}
In this paper we prove new multiplicity results for a critical growth anisotropic quasilinear elliptic system that is coupled through a subcritical perturbation term. We identify a certain scaling for the system and a parameter $\gamma$ related to this scaling that determines the geometry of the associated variational functional. This leads to a natural classification of different nonlinear regimes for the system in terms of scaling properties of the perturbation term. We give three different types of multiplicity results in the three regimes $\gamma = 1$, $\gamma > 1$, and $\gamma < 1$. Proofs of our multiplicity results are based on a new abstract critical point theorem for symmetric functionals on product spaces, which we prove using the piercing property of the $\Z_2$-cohomological index of Fadell and Rabinowitz. This abstract result only requires a local \PS{} condition and is therefore applicable to systems with critical growth. It is of independent interest as it has wide applicability to many different types of critical elliptic systems. We also indicate how it can be applied to obtain similar multiplicity results for nonlocal systems.
\end{abstract}

\newpage

{\small \tableofcontents}

\section{Introduction}

In this paper we prove new multiplicity results for the critical growth anisotropic quasilinear elliptic system
\begin{equation} \label{101}
\left\{\begin{aligned}
- \Delta_p u & = \frac{\lambda a}{a + b}\, |u|^{a-2}\, |v|^b\, u + |u|^{p^\ast - 2}\, u && \text{in } \Omega\\[10pt]
- \Delta_q v & = \frac{\lambda b}{a + b}\, |u|^a\, |v|^{b-2}\, v + |v|^{q^\ast - 2}\, v && \text{in } \Omega\\[10pt]
u = v & = 0 && \text{on } \bdry{\Omega},
\end{aligned}\right.
\end{equation}
where $\Omega$ is a bounded domain in $\R^N$, $\Delta_p u = \divg\, (|\nabla u|^{p-2}\, \nabla u)$ is the $p$-Laplacian of $u$, $p, q \in (1,N)$, $p^\ast = Np/(N - p)$ and $q^\ast = Nq/(N - q)$ are the critical Sobolev exponents, $a, b > 1$ satisfy
\begin{equation} \label{102}
\frac{a}{p_0} + \frac{b}{q_0} = 1
\end{equation}
for some $p_0 \in (1,p^\ast)$ and $q_0 \in (1,q^\ast)$, and $\lambda > 0$ is a parameter. By \eqref{102} and the H\"{o}lder and Young's inequalities,
\begin{equation} \label{103}
\int_\Omega |u|^a\, |v|^b\, dx \le \left(\int_\Omega |u|^{p_0}\, dx\right)^{a/p_0}\! \left(\int_\Omega |v|^{q_0}\, dx\right)^{b/q_0} \le \frac{a}{p_0} \int_\Omega |u|^{p_0}\, dx + \frac{b}{q_0} \int_\Omega |v|^{q_0}\, dx,
\end{equation}
so system \eqref{101} may be thought of as a subcritical perturbation of the critical system corresponding to $\lambda = 0$.

Critical quasilinear systems in the isotropic case $p = q$ have been studied in, e.g., Carmona et al.\! \cite{MR3169759}, de Morais Filho and Souto \cite{MR1698537}, Ding and Xiao \cite{MR2577821}, and Hsu \cite{MR2532793}. On the other hand, subcritical quasilinear systems in the anisotropic case $p \ne q$ have been studied in, e.g., Bozhkov and Mitidieri \cite{MR1970963} and Candela et al.\! \cite{MR4250461}. However, critical anisotropic quasilinear systems do not seem to have been studied in the literature.

Proofs of our multiplicity results will be based on a new abstract critical point theorem for symmetric functionals on product spaces that we will prove in the next section (see Theorem \ref{Theorem 305}). This abstract result only requires a local \PS{} condition and is therefore applicable to systems with critical growth. It is of independent interest as it has wide applicability to many different types of critical elliptic systems. In particular, we will apply it to a related system in Section \ref{Subsection 1.4} and indicate how it can be applied to nonlocal systems in Section \ref{Subsection 1.5}.

We work in the product space
\[
W = W^{1,\,p}_0(\Omega) \times W^{1,\,q}_0(\Omega) = \set{w = (u,v) : u \in W^{1,\,p}_0(\Omega),\, v \in W^{1,\,q}_0(\Omega)}
\]
endowed with the norm
\[
\norm{w} = \norm[W^{1,\,p}_0(\Omega)]{u} + \norm[W^{1,\,q}_0(\Omega)]{v}.
\]
The variational functional associated with system \eqref{101} is given by
\begin{multline} \label{115}
E(w) = \int_\Omega \left(\frac{1}{p}\, |\nabla u|^p + \frac{1}{q}\, |\nabla v|^q\right) dx - \frac{\lambda}{a + b} \int_\Omega |u|^a\, |v|^b\, dx\\[7.5pt]
- \int_\Omega \left(\frac{1}{p^\ast}\, |u|^{p^\ast} + \frac{1}{q^\ast}\, |v|^{q^\ast}\right) dx, \quad w \in W.
\end{multline}
Consider the scaling
\[
W \times [0,\infty) \to W, \quad (w,t) \mapsto w_t = (t^{1/p}\, u,t^{1/q}\, v).
\]
We have
\begin{multline*}
E(w_t) = t \int_\Omega \left(\frac{1}{p}\, |\nabla u|^p + \frac{1}{q}\, |\nabla v|^q\right) dx - \frac{\lambda t^\gamma}{a + b} \int_\Omega |u|^a\, |v|^b\, dx\\[7.5pt]
- \int_\Omega \left(\frac{t^{p^\ast/p}}{p^\ast}\, |u|^{p^\ast} + \frac{t^{q^\ast/q}}{q^\ast}\, |v|^{q^\ast}\right) dx \quad \forall w \in W,\, t \ge 0,
\end{multline*}
where
\begin{equation} \label{104}
\gamma = \frac{a}{p} + \frac{b}{q}.
\end{equation}
The way that the term $\int_\Omega |u|^a\, |v|^b\, dx$ scales in comparison to the way that the leading term $\int_\Omega \left(\frac{1}{p}\, |\nabla u|^p + \frac{1}{q}\, |\nabla v|^q\right) dx$ scales will determine the type of multiplicity result that we prove, i.e., we will obtain different types of multiplicity results in the three cases $\gamma = 1$, $\gamma > 1$, and $\gamma < 1$.

The related eigenvalue problem
\begin{equation} \label{105}
\left\{\begin{aligned}
- \Delta_p u & = \frac{\lambda \alpha}{\alpha + \beta}\, |u|^{\alpha - 2}\, |v|^\beta\, u && \text{in } \Omega\\[10pt]
- \Delta_q v & = \frac{\lambda \beta}{\alpha + \beta}\, |u|^\alpha\, |v|^{\beta - 2}\, v && \text{in } \Omega\\[10pt]
u = v & = 0 && \text{on } \bdry{\Omega},
\end{aligned}\right.
\end{equation}
where $\alpha, \beta > 1$ satisfy
\begin{equation} \label{106}
\frac{\alpha}{p} + \frac{\beta}{q} = 1,
\end{equation}
will play a major role in our results and proofs. Note that if $w \in W \setminus \set{0}$ is an eigenfunction associated with an eigenvalue $\lambda$, then $w_t$ is also an eigenfunction associated with $\lambda$ for any $t > 0$ due to \eqref{106}. A sequence of minimax eigenvalues of this problem can be constructed in a standard way using the Krasnoselskii's genus. However, the proofs of our multiplicity results require a certain intersection property of the $\Z_2$-cohomological index (see Definition \ref{Definition 301} and property $(i_7)$ in Proposition \ref{Proposition 303}). The genus does not share this property, so we will work with a different sequence of eigenvalues $\seq{\lambda_k}$ constructed using the cohomological index (see Theorem \ref{Theorem 401}).

\subsection{The case $\gamma = 1$} \label{Subsection 1.1}

Let $\seq{\lambda_k}$ be the sequence of eigenvalues of the eigenvalue problem
\begin{equation} \label{107}
\left\{\begin{aligned}
- \Delta_p u & = \frac{\lambda a}{a + b}\, |u|^{a - 2}\, |v|^b\, u && \text{in } \Omega\\[10pt]
- \Delta_q v & = \frac{\lambda b}{a + b}\, |u|^a\, |v|^{b - 2}\, v && \text{in } \Omega\\[10pt]
u = v & = 0 && \text{on } \bdry{\Omega}
\end{aligned}\right.
\end{equation}
based on the cohomological index (see Theorem \ref{Theorem 401}). We will show that system \eqref{101} has $m$ distinct pairs of nontrivial solutions for all $\lambda$ in a suitably small left neighborhood of an eigenvalue of multiplicity $m \ge 1$.

We have
\[
\frac{a}{p^\ast} + \frac{b}{q^\ast} < \frac{a}{p_0} + \frac{b}{q_0} = 1
\]
by \eqref{102}. Let
\begin{equation} \label{113}
\frac{1}{\zeta} = 1 - \frac{a}{p^\ast} - \frac{b}{q^\ast}
\end{equation}
and let
\begin{equation} \label{515}
\delta = \frac{a + b}{a^{a/p^\ast} b^{b/q^\ast}} \left(\frac{\zeta}{N \vol{\Omega}}\, \min \set{S_p^{N/p},S_q^{N/q}}\right)^{1/\zeta},
\end{equation}
where $\vol{\Omega}$ denotes the volume of $\Omega$ and
\begin{equation} \label{201}
S_p = \inf_{u \in \D^{1,\,p}(\R^N) \setminus \set{0}}\, \frac{\int_{\R^N} |\nabla u|^p\, dx}{\left(\int_{\R^N} |u|^{p^\ast} dx\right)^{p/p^\ast}}, \qquad S_q = \inf_{u \in \D^{1,\,q}(\R^N) \setminus \set{0}}\, \frac{\int_{\R^N} |\nabla u|^q\, dx}{\left(\int_{\R^N} |u|^{q^\ast} dx\right)^{q/q^\ast}}.
\end{equation}

\begin{theorem} \label{Theorem 101}
Assume that $\gamma = 1$. If $\lambda_k = \cdots = \lambda_{k+m-1} < \lambda_{k+m}$ for some $k, m \ge 1$, then for all $\lambda \in (\lambda_k - \delta,\lambda_k)$, system \eqref{101} has $m$ distinct pairs of nontrivial solutions $\pm w^\lambda_j,\, j = 1,\dots,m$ at positive energy levels such that each $w^\lambda_j \to 0$ as $\lambda \nearrow \lambda_k$.
\end{theorem}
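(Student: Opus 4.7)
The plan is to apply the abstract symmetric critical point theorem (Theorem \ref{Theorem 305}) to the functional $E$ from \eqref{115}. Writing
\[
E(w) = I(w) - \lambda J(w) - K(w)
\]
with $I(w) = \int_\Omega \bigl(|\nabla u|^p/p + |\nabla v|^q/q\bigr)\, dx$, $J(w) = (a+b)^{-1} \int_\Omega |u|^a\, |v|^b\, dx$, and $K(w) = \int_\Omega \bigl(|u|^{p^\ast}/p^\ast + |v|^{q^\ast}/q^\ast\bigr)\, dx$, the regime $\gamma = 1$ makes both $I$ and $J$ positively homogeneous of degree $1$ under the scaling $w \mapsto w_t$, while $K(w_t)$ is strictly superlinear in $t$. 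The scaling identity
\[
E(w_t) = t\bigl(I(w) - \lambda J(w)\bigr) - K(w_t)
\]
is the bridge between closeness of $\lambda$ to $\lambda_k$ on the eigenvalue manifold $\M = \set{w : I(w) = 1}$ of \eqref{107} and smallness of the mountain-pass maximum of the curve $t \mapsto E(w_t)$.

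First I would fix the linking geometry by exploiting the piercing property $(i_7)$ of the $\Z_2$-cohomological index $i$ from Proposition \ref{Proposition 303}. The variational characterization of $\lambda_k$ via $i$ in Theorem \ref{Theorem 401}, together with the assumption $\lambda_k = \cdots = \lambda_{k+m-1} < \lambda_{k+m}$, yields a symmetric compact set $A^\ast \subset \M$ with $i(A^\ast) \ge k+m-1$ on which $1/J \le \lambda_k$, and dually a closed symmetric set $B^\ast \subset \M$ with $i(\M \setminus B^\ast) \le k-1$ on which $1/J \ge \lambda_k$. From these I will form $A_0 = \set{w_t : w \in A^\ast,\ 0 \le t \le T}$, with $T$ so large that $E(w_T) \le 0$ for every $w \in A^\ast$, and $B_0 = \set{w_r : w \in B^\ast}$ for a suitably small $r > 0$; the index inequalities $i(A_0) \ge k+m-1$ and $i(W \setminus B_0) \le k-1$ are inherited from $A^\ast, B^\ast$ through the continuous scaling.

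The energy estimates then drop out of the scaling identity combined with the H\"older control \eqref{103} of $J$ by the critical norms. On $A^\ast$ the bound $J(w) \ge 1/\lambda_k$ yields $I(w) - \lambda J(w) \le 1 - \lambda/\lambda_k$, and maximizing $t \mapsto t\,(1 - \lambda J(w)) - K(w_t)$ in $t \ge 0$, while using \eqref{103} and the Sobolev constants $S_p, S_q$ from \eqref{201} to bound $K$ from below in terms of the critical norms, produces an upper bound on $\sup_{A_0} E$ that the explicit expression \eqref{515} for $\delta$ is calibrated to make strictly less than the local Palais--Smale threshold $c^\ast = \frac{1}{N}\min\set{S_p^{N/p},S_q^{N/q}}$ for every $\lambda \in (\lambda_k - \delta,\lambda_k)$. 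On $B_0$, using $1/J \ge \lambda_k$ on $B^\ast$ and absorbing the critical term by Sobolev for $r > 0$ small gives $\inf_{B_0} E > 0$. With this geometry, Theorem \ref{Theorem 305} will produce $m$ distinct pairs of critical points $\pm w^\lambda_j$ at levels $c^\lambda_j \in (0,c^\ast)$.

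The two main obstacles are: (a) verifying the local \PS{c} condition for $c \in (0,c^\ast)$, which requires a concentration--compactness analysis in the spirit of Struwe adapted to the anisotropic pair $(\Delta_p,\Delta_q)$, showing that any loss of compactness forces a $p$- or $q$-bubble of energy at least $\frac{1}{N}S_p^{N/p}$ or $\frac{1}{N}S_q^{N/q}$; and (b) transferring the index inequalities from $\M$ to the scaled sets $A_0, B_0 \subset W$ while respecting both dilations $t^{1/p}$ and $t^{1/q}$ simultaneously, which is where the piercing property of $i$ and the product structure of $W$ are essential. The convergence $w^\lambda_j \to 0$ as $\lambda \nearrow \lambda_k$ will then follow because the critical values $c^\lambda_j$ are bounded above by an explicit function of $\lambda_k - \lambda$ involving $\zeta$ from \eqref{113} that vanishes in the limit; combining $c^\lambda_j \to 0$ with the Nehari-type identity $\langle E'(w^\lambda_j),w^\lambda_j\rangle = 0$ and the uniform H\"older control \eqref{103} forces $\norm{w^\lambda_j} \to 0$.
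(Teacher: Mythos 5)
Your overall strategy matches the paper: apply Theorem \ref{Theorem 305} to $E = I - \lambda J - K$, take $A_0, B_0$ from sublevel/superlevel sets of $\Psi = 1/J$ on $\M^+$ (using the cohomological index eigenvalue theory of Theorem \ref{Theorem 401}), verify the geometry via the scaling curve $t \mapsto E(w_t)$, bound $\sup_X E$ using the H\"older inequality and the calibration of $\delta$, and invoke the local \PS{} condition (the paper's Lemma \ref{Lemma 201}, done by a Br\'ezis--Lieb argument rather than full concentration--compactness). The final step, obtaining $w^\lambda_j \to 0$ from vanishing energy plus the critical-point identity, is also as in the paper.

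There is, however, a genuine gap in how you construct the ``inner'' set. You claim to have a compact symmetric $A^\ast \subset \M$ with $i(A^\ast) \ge k+m-1$ and $\Psi = 1/J \le \lambda_k$ on $A^\ast$. This is unjustified: the open set $\M^+ \setminus \Psi_{\lambda_k} = \set{\Psi < \lambda_k}$ has index at most $k-1$ by Theorem \ref{Theorem 401} \ref{Theorem 401.iii} (using $\lambda_{l-1} < \lambda_l = \cdots = \lambda_k$), so a compact set of index $k+m-1$ inside $\set{\Psi \le \lambda_k}$ would have to absorb a sizable chunk of the eigenfunction level set $E_{\lambda_k}$, which need not be compact and does not obviously combine with lower sublevels to give the required index. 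The paper sidesteps this by enlarging to the \emph{open} set $\M^+ \setminus \Psi_{\lambda_{k+m-1}+\eps_\lambda}$, which has index exactly $k+m-1$ and, being open, admits a compact symmetric subset $A_0$ of the same index by Degiovanni--Lancelotti. On that $A_0$ one only has $\Psi < \lambda_k + \eps_\lambda$, and this $\eps_\lambda$-margin is precisely what forces the factor $(1+\eps)(\lambda_k - \lambda)$ in the bound for $\sup_X E$; the constant $\delta$ in \eqref{515} and the constraint \eqref{514} on $\eps$ are calibrated so this stays strictly below $c^\ast$. Without the margin there is no mechanism to get a compact $A_0$ at all, so your version does not close. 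Secondary but worth fixing: your inequality $i(W \setminus B_0) \le k-1$ (with $B_0$ a scaled sphere in $W$) cannot hold, as $W \setminus B_0$ is a neighborhood of the origin and so has large index; the hypothesis of Theorem \ref{Theorem 305} is $i(\M \setminus B_0) \le k-1$ with $A_0, B_0 \subset \M$, and the paper takes $B_0 = \Psi_{\lambda_k} \cup (\M \setminus \M^+)$ so that $\M \setminus B_0 = \M^+ \setminus \Psi_{\lambda_k}$. Likewise, the cone $\set{w_t : w \in A^\ast, 0 \le t \le T}$ contains $0$, so it lies outside the admissible class $\A$ and is not what should carry the index hypothesis; in the theorem's notation it is $X$, not $A_0$.
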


In particular, we have the following corollary when $m = 1$ (since $\lambda_k \nearrow \infty$, by taking $k$ larger if necessary, we may assume that $\lambda_k < \lambda_{k+1}$).

\begin{corollary}
If $\gamma = 1$, then for each $k \ge 1$, system \eqref{101} has a nontrivial solution for all $\lambda \in (\lambda_k - \delta,\lambda_k)$.
\end{corollary}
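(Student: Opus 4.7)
The plan is to derive the corollary as a direct specialization of Theorem \ref{Theorem 101}. Fix $k \ge 1$. Since the sequence $\seq{\lambda_j}$ is nondecreasing and satisfies $\lambda_j \to \infty$ (which follows from its minimax construction via the cohomological index in Theorem \ref{Theorem 401}), there exists a largest integer $k_1 \ge k$ with $\lambda_{k_1} = \lambda_k$; by maximality, $\lambda_{k_1} < \lambda_{k_1 + 1}$. This is precisely the content of the parenthetical remark in the corollary statement: by passing from $k$ to $k_1$ we may assume that the chosen eigenvalue lies at the top of its cluster, i.e., that there is a spectral gap immediately to the right.

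Applying Theorem \ref{Theorem 101} with the pair $(k_1, 1)$ playing the role of $(k, m)$ in its hypothesis then yields one pair $\pm w^\lambda$ of nontrivial solutions of system \eqref{101}, at positive energy level, for every $\lambda \in (\lambda_{k_1} - \delta, \lambda_{k_1})$. Since $\lambda_{k_1} = \lambda_k$, this interval coincides with $(\lambda_k - \delta,\lambda_k)$, and existence of a nontrivial solution throughout this interval is established. There is no genuine obstacle here: the content of the corollary is already encoded in Theorem \ref{Theorem 101}, and the only observation required is that one may always shift $k$ forward within its eigenvalue cluster to obtain a right-spectral gap, which is guaranteed by $\lambda_j \nearrow \infty$.
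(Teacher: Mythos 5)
Your proposal is correct and is exactly the paper's approach: the paper's entire justification is the parenthetical remark "(since $\lambda_k \nearrow \infty$, by taking $k$ larger if necessary, we may assume that $\lambda_k < \lambda_{k+1}$)", and you have simply made that shift explicit by choosing the largest $k_1 \ge k$ in the cluster and applying Theorem \ref{Theorem 101} with $(k_1,1)$, noting that $\delta$ in \eqref{515} is independent of $k$ so the interval is unchanged.
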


\begin{remark}
A similar result for a single equation was proved in Cerami et al.\! \cite{MR779872} in the semilinear case $p = 2$ and in Perera et al.\! \cite{MR3469053} in the general case $1 < p < N$.
\end{remark}

\subsection{The case $\gamma > 1$}

We will show that system \eqref{101} has arbitrarily many solutions for all sufficiently large $\lambda$ when $\gamma > 1$.

\begin{theorem} \label{Theorem 104}
Assume that \eqref{102} holds and $\gamma > 1$. Then for any $m \ge 1$, $\exists \lambda_m^\ast > 0$ such that for all $\lambda > \lambda_m^\ast$, system \eqref{101} has $m$ distinct pairs of nontrivial solutions $\pm w^\lambda_j,\, j = 1,\dots,m$ at positive energy levels such that each $w^\lambda_j \to 0$ as $\lambda \to \infty$. In particular, the number of solutions goes to infinity as $\lambda \to \infty$.
\end{theorem}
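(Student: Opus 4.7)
The plan is to rescale the problem so that for large $\lambda$ it becomes a small perturbation of a purely subcritical one, and then apply the abstract Theorem \ref{Theorem 305}. Since $\gamma > 1$, the change of variables $w = (\mu^{1/p}\tilde u,\mu^{1/q}\tilde v)$ with $\mu = \lambda^{-1/(\gamma-1)} \to 0$ gives $E(w) = \mu\, \tilde E_\lambda(\tilde w)$, where
\begin{multline*}
\tilde E_\lambda(\tilde w) = \int_\Omega\left(\frac{|\nabla \tilde u|^p}{p} + \frac{|\nabla \tilde v|^q}{q}\right) dx - \frac{1}{a+b}\int_\Omega |\tilde u|^a|\tilde v|^b\, dx\\
- \frac{\mu^{\kappa_p}}{p^\ast}\int_\Omega|\tilde u|^{p^\ast}\,dx - \frac{\mu^{\kappa_q}}{q^\ast}\int_\Omega|\tilde v|^{q^\ast}\,dx,
\end{multline*}
with $\kappa_p = p^\ast/p - 1 > 0$ and $\kappa_q = q^\ast/q - 1 > 0$. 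Critical points of $\tilde E_\lambda$ correspond bijectively to those of $E$, and any uniformly $W$-bounded family of rescaled critical points yields $w^\lambda \to 0$ as $\mu \to 0$.

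The formal limit functional $\tilde E_\infty$ obtained by letting $\mu \to 0$ is subcritical by \eqref{102}. Because $\gamma > 1$, the anisotropic scaling $\tilde w_t = (t^{1/p}\tilde u,t^{1/q}\tilde v)$ makes $t \mapsto \tilde E_\infty(\tilde w_t) = tT(\tilde w) - t^\gamma B(\tilde w)/(a+b)$ attain a unique positive maximum $t^\ast(\tilde w)$ whenever $\tilde u,\tilde v \not\equiv 0$, yielding a $\Z_2$-symmetric Nehari-type manifold $\M^\infty$. I would then construct a sequence of positive critical levels $c_j^\infty > 0$, $j \ge 1$, of $\tilde E_\infty$ by the cohomological-index minimax scheme of Theorem \ref{Theorem 401} applied to $\M^\infty$; the same construction on a perturbed manifold $\M^\lambda$ produces levels $c_j^\lambda$, which converge to $c_j^\infty$ as $\lambda \to \infty$ since $\mu^{\kappa_p},\mu^{\kappa_q} \to 0$.

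Fix $m \ge 1$. For $\lambda$ large the values $0 < c_1^\lambda \le \cdots \le c_m^\lambda$ are bounded above by a constant $C$. On the other hand, the local \PS{c} condition for $\tilde E_\lambda$ holds when $c$ is below a threshold of order $\mu^{-\theta}$ with $\theta > 0$, obtained by the standard single-bubble computation applied to each of the two critical terms; this threshold tends to infinity and eventually exceeds $C$. Theorem \ref{Theorem 305}, together with the piercing property of the cohomological index (Proposition \ref{Proposition 303}$(i_7)$), then produces $m$ distinct pairs of critical points $\pm \tilde w_j^\lambda$ at the levels $c_j^\lambda$. Unscaling yields pairs $\pm w_j^\lambda$ of critical points of $E$ at positive levels $\mu\, c_j^\lambda$; because the $\tilde w_j^\lambda$ remain bounded in $W$, we have $\norm{w_j^\lambda} = O(\mu^{1/p} + \mu^{1/q}) \to 0$, which picks out the desired threshold $\lambda_m^\ast$.

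The main obstacle will be justifying the two convergence claims on which everything else rests: that the local \PS{} threshold for $\tilde E_\lambda$ diverges with $\lambda$, and that the minimax levels $c_j^\lambda$ indeed converge to $c_j^\infty$. The former requires a concentration-compactness analysis of the two critical terms that carefully tracks the vanishing coefficients $\mu^{\kappa_p},\mu^{\kappa_q}$ to show that bubbling configurations have energies tending to infinity, while the latter requires uniform continuity of the Nehari parametrization $t^\ast(\tilde w)$ and of the associated projection under the vanishing critical perturbation.
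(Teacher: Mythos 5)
Your rescaling computation is correct, and the heuristic picture it reveals---that after dividing by $\mu = \lambda^{-1/(\gamma-1)}$ the problem becomes a vanishing critical perturbation of a subcritical one with a diverging \PS{} threshold $c^\ast/\mu$---is a legitimate way to understand why large $\lambda$ helps. However, the execution you outline has two genuine gaps that you yourself flag and that are not easy to fill. First, Theorem \ref{Theorem 305} and Corollary \ref{Corollary 306} do not take a pre-constructed family of minimax levels as input; they require you to exhibit concrete sets $A_0$, $B_0$, $A$, $B$, $X$ with the index bounds and the inequalities $\sup_A E \le 0 < \inf_B E$ and $\sup_X E < c^\ast$. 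Asserting convergence $c_j^\lambda \to c_j^\infty$ of levels built from a Nehari-type manifold for $\tilde E_\infty$ (an anisotropic Nehari constraint which is itself nonstandard: it is not $\langle E'(w),w\rangle = 0$ but a weighted version coming from $\frac{d}{dt}E(w_t)|_{t=1}=0$, and $\M^\infty$ is not compact nor is $\tilde E_\infty$ bounded on it) does not by itself set up an application of the abstract theorem. Second, the claim that the $\tilde w_j^\lambda$ remain bounded in $W$ uniformly in $\lambda$ is not established; bounded minimax levels do not automatically give uniformly bounded critical points.

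The paper's route is considerably shorter and stays with the original functional $E$. It invokes the eigenvalue problem \eqref{105} with exponents $\alpha,\beta$ satisfying $\alpha/p + \beta/q = 1$ (note: not the pair $a,b$, since $a/p+b/q=\gamma\ne 1$, so \eqref{107} is not an eigenvalue problem here), picks a compact $A_0\subset\M^+\setminus\Psi_{\lambda_{m+1}}$ with $i(A_0)=m$, and then uses the scaling identity $E(w_t)\le t - c_1\lambda t^\gamma$ on $A_0$ to obtain directly
\[
\sup_{w\in X} E(w) \le \max_{t\ge 0}\bigl(t - c_1\lambda t^\gamma\bigr) = \frac{\gamma-1}{\gamma\,(c_1\lambda\gamma)^{1/(\gamma-1)}} \longrightarrow 0 \quad \text{as } \lambda\to\infty,
\]
so Corollary \ref{Corollary 306} applies once this falls below $c^\ast$. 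No limit functional, no comparison of minimax levels, and no a priori bound on the solutions are needed; the decay $w_j^\lambda \to 0$ is then extracted from $E(w_j^\lambda)=\o(1)$ combined with $E'(w_j^\lambda)(u_j^\lambda,0)=E'(w_j^\lambda)(0,v_j^\lambda)=0$, which (dividing by $p$ and $q$ and subtracting) forces $\int|u_j^\lambda|^{p^\ast}$, $\int|v_j^\lambda|^{q^\ast}$, and $\int|u_j^\lambda|^a|v_j^\lambda|^b$ to vanish, hence $I(w_j^\lambda)\to 0$. If you want to keep the rescaled picture, notice that after rescaling the same choice of $A_0$ gives $\sup\tilde E_\lambda \le \max_t(t-c_1 t^\gamma)$, a $\lambda$-independent constant, while the threshold is $c^\ast/\mu$; this is precisely the paper's estimate in disguise and avoids the Nehari detour entirely.
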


If $p_0 \in (p,p^\ast)$ and $q_0 \in (q,q^\ast)$ in \eqref{102}, then
\[
\gamma = \frac{a}{p} + \frac{b}{q} > \frac{a}{p_0} + \frac{b}{q_0} = 1,
\]
so we have the following corollary.

\begin{corollary}
If \eqref{102} holds for some $p_0 \in (p,p^\ast)$ and $q_0 \in (q,q^\ast)$, then for any $m \ge 1$, $\exists \lambda_m^\ast > 0$ such that for all $\lambda > \lambda_m^\ast$, system \eqref{101} has $m$ distinct pairs of nontrivial solutions $\pm w^\lambda_j,\, j = 1,\dots,m$ at positive energy levels such that each $w^\lambda_j \to 0$ as $\lambda \to \infty$. In particular, the number of solutions goes to infinity as $\lambda \to \infty$.
\end{corollary}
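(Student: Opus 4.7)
The plan is to apply the abstract critical point Theorem~\ref{Theorem 305} by constructing, for each $m \ge 1$ and all sufficiently large $\lambda$, a symmetric subset $A^\lambda_m \subset W \setminus \set{0}$ of $\Z_2$-cohomological index at least $m$ on which $\sup E > 0$ lies below the local \PS{} threshold of $E$ (a positive constant depending only on $N$, $p$, $q$, $\vol{\Omega}$, $S_p$, $S_q$). The key observation is that under the scaling $w \mapsto w_t$ the perturbation contributes $\lambda\, t^\gamma$, which for $\gamma > 1$ is dominated by the leading linear term $t$ near $t = 0$; balancing the two at a well-chosen $t = t(\lambda)$ produces a maximum that vanishes as $\lambda \to \infty$.

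To produce $A^\lambda_m$, I would set $\alpha = a/\gamma$ and $\beta = b/\gamma$, which by~\eqref{104} satisfy~\eqref{106}, and apply Theorem~\ref{Theorem 401} to the eigenvalue problem~\eqref{105} with these exponents to obtain a symmetric compact set $C_m \subset \set{w \in W : \int_\Omega |u|^\alpha\, |v|^\beta\, dx = 1}$ with cohomological index at least $m$ on which $I_0(w) := \int_\Omega \bigl(\tfrac{1}{p}\, |\nabla u|^p + \tfrac{1}{q}\, |\nabla v|^q\bigr) dx$ is bounded by a constant $K_m$. Any eigenfunction of~\eqref{105} with either component vanishing forces the other to vanish too, so every $w = (u,v) \in C_m$ has both components nontrivial; by compactness the continuous positive quantities $\int_\Omega |u|^a\, |v|^b\, dx$, $\int_\Omega |u|^{p^\ast}\, dx$, $\int_\Omega |v|^{q^\ast}\, dx$ each admit uniform positive upper and lower bounds over $C_m$. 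Picking $t(\lambda)$ to balance the first two terms in the formula for $E(w_t)$ displayed before~\eqref{104} gives $t(\lambda) = O\bigl(\lambda^{-1/(\gamma - 1)}\bigr) \to 0$; since $p^\ast/p,\, q^\ast/q > 1$, the critical contributions $t(\lambda)^{p^\ast/p}$ and $t(\lambda)^{q^\ast/q}$ are strictly of lower order than $\lambda\, t(\lambda)^\gamma$, whence
\[
\sup_{w \in C_m} E\bigl(w_{t(\lambda)}\bigr) = O\bigl(\lambda^{-1/(\gamma - 1)}\bigr) \longrightarrow 0 \quad \text{as } \lambda \to \infty.
\]

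Since $w \mapsto w_{t(\lambda)}$ is a $\Z_2$-equivariant homeomorphism of $W$, the set $A^\lambda_m := \set{w_{t(\lambda)} : w \in C_m}$ satisfies $i(A^\lambda_m) \ge m$. For all sufficiently large $\lambda$ the supremum above lies strictly below the local \PS{} threshold, and Theorem~\ref{Theorem 305} yields $m$ distinct pairs $\pm w^\lambda_j$ of nontrivial critical points at positive energy levels $c^\lambda_j \le \sup_{A^\lambda_m} E \to 0$. The convergence $w^\lambda_j \to 0$ in $W$ then follows from the identity
\[
E(w) = \frac{\lambda\, (\gamma - 1)}{a + b}\, \int_\Omega |u|^a\, |v|^b\, dx + \frac{1}{N}\int_\Omega |u|^{p^\ast}\, dx + \frac{1}{N}\int_\Omega |v|^{q^\ast}\, dx,
\]
valid at any critical point $w = (u,v)$ (by eliminating $I_0(w)$ between $E(w)$ and $\tfrac{1}{p}\, \langle E'(w),(u,0)\rangle + \tfrac{1}{q}\, \langle E'(w),(0,v)\rangle = 0$), combined with the Euler--Lagrange equations: all three terms on the right are nonnegative, so they vanish individually along $\pm w^\lambda_j$, and substituting back into $\langle E'(w^\lambda_j),(u^\lambda_j,0)\rangle = \langle E'(w^\lambda_j),(0,v^\lambda_j)\rangle = 0$ forces $\norm[W^{1,\,p}_0(\Omega)]{u^\lambda_j} \to 0$ and $\norm[W^{1,\,q}_0(\Omega)]{v^\lambda_j} \to 0$. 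The main obstacle will be making the scaling estimate rigorous uniformly over $C_m$ against the interplay between the two distinct critical exponents $p^\ast/p$ and $q^\ast/q$.
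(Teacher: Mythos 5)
The paper proves this corollary in one line: if $p_0 \in (p,p^\ast)$ and $q_0 \in (q,q^\ast)$, then
\[
\gamma = \frac{a}{p} + \frac{b}{q} > \frac{a}{p_0} + \frac{b}{q_0} = 1,
\]
so the hypotheses of Theorem~\ref{Theorem 104} hold and that theorem (which has already been established) gives the conclusion verbatim. You have missed this trivial reduction: the statement of the corollary is identical to the conclusion of Theorem~\ref{Theorem 104}, and all one must check is $\gamma > 1$.

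What you have written is instead an attempt to re-derive Theorem~\ref{Theorem 104} from scratch, and that attempt has genuine gaps. First, your specific choice $\alpha = a/\gamma$, $\beta = b/\gamma$ need not satisfy $\alpha, \beta > 1$: one has $\alpha > 1$ iff $a > \gamma = a/p + b/q$, which fails whenever $b/q$ is large relative to $a(1 - 1/p)$. The paper's proof does not tie $\alpha, \beta$ to $a, b$ --- it simply picks any $\alpha, \beta > 1$ satisfying \eqref{106}, and the perturbation $\int_\Omega |u|^a\, |v|^b\, dx$ remains with its original exponents $a, b$ in the energy. Second, your invocation of Theorem~\ref{Theorem 305} is not correct as stated: that theorem does not accept merely a single symmetric set of large index at one scaling level. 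It requires a compact set $A_0 \subset \M$, radii $R > \rho > 0$, the cone $X = \set{w_t : w \in A_0,\, 0 \le t \le R}$, the sets $A$ at level $R$ and $B = \M_\rho$ (or a suitable $B_0$), and verification of the full geometry in \eqref{306} or \eqref{311} --- including $\sup_A E \le 0$ and $\inf_B E > 0$, neither of which you address. The critical levels are then bounded by $\sup_X E$ over the entire cone, not by $\sup_{A^\lambda_m} E$ at your chosen slice $t = t(\lambda)$; you would need to maximize $t - c_1 \lambda t^\gamma$ over all $t \in [0,R]$ (as the paper does) to justify the estimate $\sup_X E = O(\lambda^{-1/(\gamma-1)})$. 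Your identity for $E$ at a critical point and the argument that $w^\lambda_j \to 0$ are correct and agree with the paper's.
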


\begin{remark}
A similar result for a single equation was proved in Perera \cite{Pe23}.
\end{remark}

\subsection{The case $\gamma < 1$}

We will show that system \eqref{101} has infinitely many solutions for all sufficiently small $\lambda$ when $\gamma < 1$.

\begin{theorem} \label{Theorem 106}
Assume that \eqref{102} holds and $\gamma < 1$. Then $\exists \lambda_\ast > 0$ such that for all $\lambda \in (0,\lambda_\ast)$, system \eqref{101} has a sequence of solutions $\seq{w^\lambda_k}$ at negative energy levels such that each $w^\lambda_k \to 0$ as $\lambda \searrow 0$.
\end{theorem}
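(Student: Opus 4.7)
The plan is to produce infinitely many small solutions of \eqref{101} by applying Theorem~\ref{Theorem 305} to a truncation $\widetilde E_\lambda$ of $E$; the truncation kills the non-compact critical part outside a small ball so that $\widetilde E_\lambda$ is bounded below and satisfies the global Palais--Smale condition, while on a smaller ball $\widetilde E_\lambda \equiv E$, so that critical points of $\widetilde E_\lambda$ at negative levels solve the original system. The driver is $\gamma < 1$: in the scaling identity
\begin{equation*}
E(w_t) = t \int_\Omega\!\left(\tfrac{1}{p}|\nabla u|^p + \tfrac{1}{q}|\nabla v|^q\right) dx - \frac{\lambda t^\gamma}{a+b}\!\int_\Omega |u|^a|v|^b\, dx - \int_\Omega\!\left(\tfrac{t^{p^*/p}}{p^*}|u|^{p^*} + \tfrac{t^{q^*/q}}{q^*}|v|^{q^*}\right) dx,
\end{equation*}
$\gamma$ is the smallest of the exponents $1$, $\gamma$, $p^*/p$, $q^*/q$, so the coupling term dominates as $t \searrow 0$ and $E(w_t) < 0$ for all small $t > 0$ on every $w$ with $\int_\Omega |u|^a|v|^b\, dx > 0$.

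Concretely, I would fix a cutoff $\chi \in C^\infty([0,\infty),[0,1])$ equal to $1$ on $[0,1]$ and to $0$ on $[2,\infty)$, and, for $R > 0$ to be chosen small, set
\begin{equation*}
\widetilde E_\lambda(w) = \int_\Omega\!\left(\tfrac{1}{p}|\nabla u|^p + \tfrac{1}{q}|\nabla v|^q\right) dx - \frac{\lambda}{a+b}\!\int_\Omega |u|^a|v|^b\, dx - \chi\!\left(\tfrac{\norm{w}}{R}\right)\!\int_\Omega\!\left(\tfrac{1}{p^*}|u|^{p^*} + \tfrac{1}{q^*}|v|^{q^*}\right) dx.
\end{equation*}
Combining \eqref{103} with the subcritical Sobolev embeddings $W^{1,p}_0(\Omega) \hookrightarrow L^{p_0}(\Omega)$ and $W^{1,q}_0(\Omega) \hookrightarrow L^{q_0}(\Omega)$ and a three-factor Young inequality that uses the slack $1 - \gamma > 0$ to write $\norm{u}^a\norm{v}^b \le (a/p)\norm{u}^p + (b/q)\norm{v}^q + (1-\gamma)$, the coupling term can be absorbed into the leading one for $\lambda$ small. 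On $\{\norm{w} \le 2R\}$ the critical part is of strictly higher order than the leading one when $R$ is small, yielding $\widetilde E_\lambda(w) \ge c_1(\norm{u}^p + \norm{v}^q) - \lambda c_2$ on $W$; hence $\widetilde E_\lambda$ is coercive, bounded below, and satisfies the global PS condition, and on $\{\norm{w} \ge R\}$ one has $\widetilde E_\lambda(w) \ge c_3 R^{\max(p,q)} - \lambda c_2 \ge 0$ provided $\lambda \le c_3 R^{\max(p,q)}/c_2$.

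On $\{\norm{w} \le R\}$ we have $\widetilde E_\lambda \equiv E$. For every $k \ge 1$, the cohomological-index construction in Theorem~\ref{Theorem 401} (applied to \eqref{105} with any $\alpha, \beta > 1$ satisfying $\alpha/p + \beta/q = 1$) furnishes a symmetric compact set $S_k \subset W \setminus \set{0}$ of $\Z_2$-cohomological index at least $k$ with $\int_\Omega |u|^\alpha|v|^\beta\, dx \ge 1$ on $S_k$; since both components of every $w \in S_k$ are nontrivial, the continuous function $\int_\Omega |u|^a|v|^b\, dx$ is bounded below by a positive constant on $S_k$ by compactness. By the scaling identity, $A_k := \{w_{t_k} : w \in S_k\}$ has the same index and satisfies $A_k \subset \{\norm{w} \le R\}$ and $\sup_{A_k} E < 0$ for $t_k$ sufficiently small. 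Theorem~\ref{Theorem 305} then produces critical values $c_k^\lambda \in [\inf \widetilde E_\lambda, 0)$ with $c_k^\lambda \nearrow 0$ and corresponding critical points $w_k^\lambda$; since $\widetilde E_\lambda \ge 0$ on $\{\norm{w} \ge R\}$, each $w_k^\lambda$ lies strictly inside $\{\norm{w} < R\}$, the truncation is inactive, and $w_k^\lambda$ solves \eqref{101}. Taking $R = R(\lambda) \to 0$ as $\lambda \searrow 0$ (for instance $R(\lambda) = (2c_2\lambda/c_3)^{1/\max(p,q)}$) then forces $\norm{w_k^\lambda} < R(\lambda) \to 0$.

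The main obstacle is the construction, for every $k \ge 1$, of a symmetric set of cohomological index at least $k$ on which the coupling density $\int_\Omega|u|^a|v|^b\, dx$ is bounded below by a positive constant: a naive product-sphere in $W^{1,p}_0(\Omega) \times W^{1,q}_0(\Omega)$ contains points with $u \equiv 0$ or $v \equiv 0$ where the coupling vanishes, and the scaling argument then fails to produce negative energies. Routing the construction through the eigenvalue problem \eqref{105}, whose nontrivial eigenfunctions automatically have both components nonzero, circumvents this, but requires propagating the nontriviality of both components from individual eigenfunctions to whole minimax sets of index $k$; handling this uniformly in $k$, and then calibrating $R(\lambda)$ so that the resulting $w_k^\lambda$ shrink to $0$ as $\lambda \searrow 0$, is the delicate quantitative step.
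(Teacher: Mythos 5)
Your overall architecture (truncate $E$ so it becomes bounded below and globally well behaved, observe that $\widetilde E_\lambda=E$ on a small region, use the eigenvalue problem \eqref{105} to manufacture compact symmetric sets of every index on which the coupling term is uniformly positive, scale them small so the $t^\gamma$ term dominates and $E<0$, then run a symmetric minimax) is exactly the strategy the paper uses. But there is a genuine gap in the abstract tool you invoke: Theorem~\ref{Theorem 305} cannot produce the negative critical levels you need. That theorem requires the linking geometry $\sup_A E\le 0<\inf_B E$ and $\sup_X E<c^\ast$, and its minimax values satisfy $c_j^\ast\ge\inf_B E>0$; all of them are strictly positive. Your claim that it ``produces critical values $c_k^\lambda\in[\inf\widetilde E_\lambda,0)$'' is simply not something that theorem says, and there is no way to invert the geometry to make it do so. What you need instead is a Clark-type symmetric minimax theorem for even functionals that are bounded below and satisfy \PS{} at negative levels, with the genus replaced by the cohomological index so it meshes with Theorem~\ref{Theorem 401}. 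The paper invokes precisely such a result (Proposition~3.36 of Perera--Agarwal--O'Regan \cite{MR2640827}), sets $c_k=\inf_{i(M)\ge k}\sup_M\widetilde E$, shows each $c_k<0$ by scaling the compact index-$k$ sets, and concludes that $c_k\nearrow 0$ is a sequence of critical values.

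Two smaller issues are worth flagging. First, your truncation multiplies only the critical term by $\chi(\norm{w}/R)$ and you then assert a \emph{global} \PS{} condition. This is stronger than you can get cheaply: a \PS{} sequence at a nonnegative level could drift toward the annulus $R\le\norm{w}\le 2R$ where the critical term is still partially active, and there the loss-of-compactness argument resurfaces. The paper avoids this by multiplying the \emph{entire} functional by $\xi_\lambda(I(w))$, with the cutoff radii $R_1(\lambda)<R_2(\lambda)$ chosen as the roots of the auxiliary function $g_\lambda(t)=t-c_1\lambda t^\gamma-c_2t^{p^\ast/p}-c_3t^{q^\ast/q}$, and then proves \PS{} only at negative levels, which is all the Clark-type theorem needs: negative energy forces $I(w_j)<R_1(\lambda)$, so the sequence never sees the truncation zone. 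Second, your calibration $R=R(\lambda)\to 0$ is in the right spirit, but the clean version is the paper's observation that $g_\lambda(\lambda)\ge 0$ for $\lambda$ small, so $R_1(\lambda)\le\lambda$, and every critical point at a negative level satisfies $I(w^\lambda_k)<R_1(\lambda)\le\lambda\to 0$. Your final worry about the coupling term on index-$k$ sets is, as you suspected, resolved by working in $\M^+\subset\M$ for \eqref{105}: every $w\in\M^+$ has both components nontrivial, so $\int_\Omega|u|^a|v|^b\,dx>0$, and compactness of the index-$k$ set gives a uniform positive lower bound.
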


If $p_0 \in (1,p)$ and $q_0 \in (1,q)$ in \eqref{102}, then
\[
\gamma = \frac{a}{p} + \frac{b}{q} < \frac{a}{p_0} + \frac{b}{q_0} = 1,
\]
so we have the following corollary.

\begin{corollary}
If \eqref{102} holds for some $p_0 \in (1,p)$ and $q_0 \in (1,q)$, then $\exists \lambda_\ast > 0$ such that for all $\lambda \in (0,\lambda_\ast)$, system \eqref{101} has a sequence of solutions $\seq{w^\lambda_k}$ at negative energy levels such that each $w^\lambda_k \to 0$ as $\lambda \searrow 0$.
\end{corollary}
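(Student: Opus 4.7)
The plan is to truncate $E$ to a bounded-below symmetric functional $\widetilde{E}$ that agrees with $E$ near the origin, and then apply a Clark-type consequence of Theorem \ref{Theorem 305} to produce infinitely many critical points of $\widetilde{E}$ at negative energy levels accumulating at $0$; these will automatically solve \eqref{101} because they eventually lie in the region where the truncation is inactive.

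To truncate, fix $\varphi \in C^\infty([0,\infty),[0,1])$ with $\varphi \equiv 1$ on $[0,1]$ and $\varphi \equiv 0$ on $[2,\infty)$, choose $R > 0$ small (depending only on the data), and set
\[
\widetilde{E}(w) = \int_\Omega \left(\tfrac{1}{p}\, |\nabla u|^p + \tfrac{1}{q}\, |\nabla v|^q\right) dx - \frac{\lambda}{a+b} \int_\Omega |u|^a\, |v|^b \, dx - \varphi(\|w\|/R) \int_\Omega \left(\tfrac{1}{p^\ast}\, |u|^{p^\ast} + \tfrac{1}{q^\ast}\, |v|^{q^\ast}\right) dx.
\]
Then $\widetilde{E}$ is even, $\widetilde{E}(0)=0$, and $\widetilde{E}\equiv E$ on $\{\|w\|\le R\}$. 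Outside $\{\|w\|\le 2R\}$ the critical terms vanish, and since $\gamma<1$, H\"older and \eqref{103} dominate the coupling term by the leading quasilinear part, giving coercivity, a lower bound, and (by standard subcritical arguments) the global \PS{} condition for $\widetilde{E}$ for $\lambda$ in a bounded range.

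For each $k$, I would then build a closed symmetric $A_k \subset W\setminus\{0\}$ with $\Z_2$-cohomological index $\ge k$ and $\sup_{A_k}\widetilde{E}<0$. Fix any $(\alpha,\beta)$ satisfying \eqref{106}; the construction of the index-based eigenvalues $\lambda_k$ of \eqref{105} in Theorem \ref{Theorem 401} yields a closed symmetric $B_k \subset W\setminus\{0\}$ with $i(B_k)\ge k$ of normalized pairs on which $\int_\Omega |u|^\alpha |v|^\beta\,dx \ge c > 0$, and H\"older together with \eqref{102} also forces $\int_\Omega |u|^a |v|^b\,dx \ge \eta_k > 0$ on $B_k$. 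Taking $A_k = \{w_t : w \in B_k\}$ for small $t$, radial homotopy invariance gives $i(A_k)=i(B_k)\ge k$, and
\[
\widetilde{E}(w_t) \le C_1\, t - \frac{\lambda \eta_k}{a+b}\, t^\gamma + O\!\left(t^{p^\ast/p} + t^{q^\ast/q}\right),
\]
which is strictly negative for $t \in (0,t_\ast(\lambda,k))$ because $\gamma < 1 < \min(p^\ast/p,\, q^\ast/q)$, provided $\lambda \in (0,\lambda_\ast)$ for some $\lambda_\ast$ depending only on the data; for such $t$, $A_k \subset \{\|w\|\le R\}$ where $\widetilde{E}=E$.

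The standard minimax values
\[
c_k^\lambda = \inf\bigl\{\sup\nolimits_{w\in A}\widetilde{E}(w) : A \subset W\setminus\{0\} \text{ closed and symmetric, } i(A) \ge k\bigr\}
\]
then satisfy $-\infty < c_k^\lambda \le \sup_{A_k}\widetilde{E}<0$, and Theorem \ref{Theorem 305} combined with the piercing property of $i$ from Proposition \ref{Proposition 303} shows each $c_k^\lambda$ is a critical value of $\widetilde{E}$, with multiplicity $\ge m$ when $c_k^\lambda=\cdots=c_{k+m-1}^\lambda$, yielding infinitely many distinct critical points $w_k^\lambda$. Monotonicity forces $c_k^\lambda\nearrow 0$, so the $w_k^\lambda$ eventually lie in $\{\|w\|\le R\}$ and hence solve \eqref{101}; for fixed $k$, inserting the same $A_k$ with $t \searrow 0$ as $\lambda \searrow 0$ gives $c_k^\lambda \to 0$, and a standard compactness argument then forces $w_k^\lambda \to 0$ in $W$. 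The main technical obstacle is the construction of $B_k$: arbitrary $k$-dimensional subspaces of $W$ contain pairs with $u\equiv 0$ or $v\equiv 0$ where the coupling vanishes, so the product structure of $W$, the eigenfunctions of \eqref{105}, and the join/product properties of $i$ from Proposition \ref{Proposition 303} must all be used to produce sets of simultaneously large index and uniformly positive coupling integral; verifying global \PS{} for the truncation is also delicate because the cutoff depends on the full $W$-norm rather than on an $L^{p^\ast}$-type quantity.
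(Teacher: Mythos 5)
Your high-level strategy — truncate $E$ so the critical term is killed at large norm, then run a Clark-type index-based minimax — is the same plan the paper uses for Theorem \ref{Theorem 106}, of which this corollary is a restatement (if $p_0<p$ and $q_0<q$ then \eqref{102} gives $\gamma=a/p+b/q<a/p_0+b/q_0=1$). But there are two genuine gaps in your proposal as written.

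The first and more serious one is your claim that the truncated functional satisfies the \emph{global} \PS{} condition ``by standard subcritical arguments.'' Cutting off the critical term at $\norm{w}\ge 2R$ does not make $\widetilde{E}$ subcritical: inside $\{\norm{w}\le R\}$ you have $\widetilde{E}=E$, and the embedding $W^{1,\,p}_0(\Omega)\hookrightarrow L^{p^\ast}(\Omega)$ is still noncompact there, so a bounded \PS{} sequence can still concentrate. What actually saves the argument is the local compactness estimate, the analogue of \eqref{214}: if a bounded \PS{c} sequence fails to converge, then necessarily $c \ge \frac{1}{N}\min\set{S_p^{N/p},S_q^{N/q}} - \frac{\lambda(1-\gamma)}{a+b}\int_\Omega |u|^a|v|^b\,dx$, and since the truncation forces \PS{} sequences at negative levels into a fixed bounded set, the coupling integral is bounded independently of $\lambda$; hence for $\lambda$ small enough this is incompatible with $c<0$. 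That is the argument the paper runs, and it cannot be replaced by ``subcritical arguments.'' A secondary point is that your cutoff $\varphi(\norm{w}/R)$ is not $C^1$ because $\norm{w}=\norm[1]{u}+\norm[2]{v}$ isn't; the paper cuts off through $\xi_\lambda(I(w))$ with $I$ smooth, which avoids this.

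The second gap is the final convergence $w^\lambda_k\to 0$. The Nehari-type identities obtained from $E'(w)(u,0)=E'(w)(0,v)=0$, when combined with $E(w)=\o(1)$, give $\frac{\lambda(\gamma-1)}{a+b}\int|u|^a|v|^b + \frac{1}{N}\int(|u|^{p^\ast}+|v|^{q^\ast})=\o(1)$; for $\gamma<1$ the first term has the wrong sign, so the argument used in the other regimes does not carry over, and a ``standard compactness argument'' does not close this. The paper instead uses a $\lambda$-dependent truncation radius $R_1(\lambda)$ coming from the auxiliary function $g_\lambda(t)=t-c_1\lambda t^\gamma - c_2 t^{p^\ast/p}-c_3 t^{q^\ast/q}$: any critical point at a negative level of the truncated functional must satisfy $I(w^\lambda_k)<R_1(\lambda)$, and one shows directly that $R_1(\lambda)\le\lambda$ for small $\lambda$, which gives the convergence. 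Your fixed $R$ does not produce this bound, so that step is missing. Finally, a smaller point: you do not need Theorem \ref{Theorem 305} here at all — since the whole minimax takes place at negative levels, the relevant tool is the ordinary index-based sublevel minimax (the paper cites \cite[Proposition 3.36]{MR2640827}), which needs only $i(M)\ge k$ for a compact symmetric $M$ with $\sup_M\widetilde{E}<0$, not the pseudo-index machinery.

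Your construction of the low-energy index-$k$ sets by scaling a compact symmetric subset of $\M^+\setminus\Psi_{\lambda_{k+1}}$ and using that $\int_\Omega|u|^a|v|^b$ is positive (hence bounded away from $0$ by compactness) is essentially the paper's construction; the ``H\"older plus \eqref{102}'' phrasing is misleading since H\"older does not give lower bounds, but the underlying idea (vanishing of the coupling integral forces $uv=0$ a.e., hence $w\notin\M^+$) is the right one.
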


\begin{remark}
A similar result for a single equation was proved in Garc{\'{\i}}a Azorero and Peral Alonso \cite{MR1083144}.
\end{remark}

\subsection{A related system} \label{Subsection 1.4}

Next we consider the related system
\begin{equation} \label{109}
\left\{\begin{aligned}
- \Delta_p u & = \frac{\lambda a}{a + b}\, |u|^{a-2}\, |v|^b\, u - \frac{\mu c}{c + d}\, |u|^{c-2}\, |v|^d\, u + |u|^{p^\ast - 2}\, u && \text{in } \Omega\\[10pt]
- \Delta_q v & = \frac{\lambda b}{a + b}\, |u|^a\, |v|^{b-2}\, v - \frac{\mu d}{c + d}\, |u|^c\, |v|^{d-2}\, v + |v|^{q^\ast - 2}\, v && \text{in } \Omega\\[10pt]
u = v & = 0 && \text{on } \bdry{\Omega},
\end{aligned}\right.
\end{equation}
where $\Omega$ is a bounded domain in $\R^N$, $p, q \in (1,N)$, $a, b, c, d > 1$ satisfy
\begin{equation} \label{110}
\frac{a}{p} + \frac{b}{q} = 1
\end{equation}
and
\begin{equation} \label{111}
\frac{c}{p} + \frac{d}{q} = \kappa < 1,
\end{equation}
and $\lambda, \mu > 0$ are parameters. When $\mu = 0$ this system reduces to the system \eqref{101} with $\gamma = 1$ that was considered in Section \ref{Subsection 1.1}. However, a new phenomenon appears when $\mu > 0$, namely, if either $\mu$ or the volume of $\Omega$ is sufficiently small, then the number of solutions goes to infinity as $\lambda \to \infty$.

As in Section \ref{Subsection 1.1}, let $\seq{\lambda_k}$ be the sequence of eigenvalues of the eigenvalue problem \eqref{107} based on the cohomological index. We have
\[
\frac{c}{p^\ast} + \frac{d}{q^\ast} < \frac{c}{p} + \frac{d}{q} < 1
\]
by \eqref{111}. Let
\begin{equation} \label{114}
\frac{1}{\eta} = 1 - \frac{c}{p^\ast} - \frac{d}{q^\ast}.
\end{equation}
\begin{theorem} \label{Theorem 110}
Assume that \eqref{110} and \eqref{111} hold. If
\begin{equation} \label{112}
\mu \vol{\Omega}^{1/\eta} < \frac{c + d}{c^{c/p^\ast} d^{d/q^\ast}} \left(\frac{\eta}{N}\, \min \set{S_p^{N/p},S_q^{N/q}}\right)^{1/\eta},
\end{equation}
then for all $\lambda > \lambda_k$, system \eqref{109} has $k$ distinct pairs of nontrivial solutions $\pm w^{\lambda,\,\mu}_j,\, j = 1,\dots,k$ at positive energy levels such that each $w^{\lambda,\,\mu}_j \to 0$ as $\mu \searrow 0$. In particular, the number of solutions goes to infinity as $\lambda \to \infty$.
\end{theorem}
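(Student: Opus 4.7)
The plan is to apply the abstract symmetric critical point theorem, Theorem \ref{Theorem 305}, to the variational functional associated with \eqref{109},
\begin{equation*}
E_{\lambda,\mu}(w) = \int_\Omega \left(\tfrac{1}{p}\, |\nabla u|^p + \tfrac{1}{q}\, |\nabla v|^q\right) dx - \tfrac{\lambda}{a+b} \int_\Omega |u|^a |v|^b\, dx + \tfrac{\mu}{c+d} \int_\Omega |u|^c |v|^d\, dx - \int_\Omega \left(\tfrac{1}{p^\ast}\, |u|^{p^\ast} + \tfrac{1}{q^\ast}\, |v|^{q^\ast}\right) dx,
\end{equation*}
whose critical points on $W$ coincide with weak solutions of \eqref{109}. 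This $C^1$ functional is even under $w \mapsto -w$, so the abstract framework applies.

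The first task is to install the linking geometry dictated by the eigenvalue problem \eqref{107}. Using the eigenvalues $\seq{\lambda_k}$ from Theorem \ref{Theorem 401} and the piercing property $(i_7)$ of the cohomological index, for $\lambda > \lambda_k$ I would produce a symmetric subset $A_k$ of the constraint $\set{\tfrac{1}{a+b} \int_\Omega |u|^a |v|^b\, dx = 1}$ with $i(A_k) \ge k$ on which the gradient integral is bounded above by $\lambda_k$, together with a pierced symmetric set $B$ on which a small symmetric sphere provides the standard ``near-origin'' bound $E_{\lambda,\mu} \ge \beta > 0$ (via Sobolev embedding and the strict subcriticality of the two coupling integrals). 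Pushing $A_k$ through the anisotropic scaling $w \mapsto w_t = (t^{1/p} u, t^{1/q} v)$ gives
\begin{equation*}
E_{\lambda,\mu}(w_t) \le (\lambda_k - \lambda)\, t + \tfrac{\mu\, t^\kappa}{c+d} \int_\Omega |u|^c |v|^d\, dx - \int_\Omega \left(\tfrac{t^{p^\ast/p}}{p^\ast}\, |u|^{p^\ast} + \tfrac{t^{q^\ast/q}}{q^\ast}\, |v|^{q^\ast}\right) dx.
\end{equation*}
Discarding the negative critical terms and applying the H\"older inequality $\int_\Omega |u|^c |v|^d\, dx \le \left(\int_\Omega |u|^{p^\ast}\, dx\right)^{c/p^\ast} \left(\int_\Omega |v|^{q^\ast}\, dx\right)^{d/q^\ast} \vol{\Omega}^{1/\eta}$ (whose exponents sum to $1$ by \eqref{114}), and then maximizing over $t > 0$ by a Young-type optimization in the spirit of the derivation of \eqref{515}, produces a sharp upper bound of the form
\begin{equation*}
\sup_{w \in A_k,\, t > 0} E_{\lambda,\mu}(w_t) \le \tfrac{1}{\eta} \left(\tfrac{\mu\, c^{c/p^\ast} d^{d/q^\ast}\, \vol{\Omega}^{1/\eta}}{c+d}\right)^{\!\eta},
\end{equation*}
which by \eqref{112} is strictly below the critical compactness threshold $c^\ast := \tfrac{1}{N} \min\set{S_p^{N/p}, S_q^{N/q}}$.

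The hard step will be verifying the local \PS{} condition at every level $c < c^\ast$. Given a \PS{c} sequence $(w_n) = (u_n, v_n)$, boundedness in $W$ follows by combining $E_{\lambda,\mu}(w_n)$ with a suitable multiple of $\langle E_{\lambda,\mu}'(w_n), w_n\rangle$ and exploiting the favorable sign of the $\mu$ term together with the critical exponents on the right. After passing to a weak limit $w_n \rightharpoonup w$, one applies the Brezis--Lieb decomposition separately to $|u_n|^{p^\ast}$ and $|v_n|^{q^\ast}$; the $\lambda$ and $\mu$ coupling integrals converge to their limits by the compactness of the relevant subcritical embeddings guaranteed by \eqref{110}--\eqref{111}. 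A standard concentration--compactness analysis then forces the critical defect measures to vanish at every level below $c^\ast$, yielding strong convergence. Applying Theorem \ref{Theorem 305} to the linking pair $(A_k, B)$ at minimax levels inside $(\beta, c^\ast)$ produces $k$ pairs $\pm w^{\lambda,\mu}_j$ of critical points at positive energies. Since the upper bound above is of order $\mu^\eta$ and therefore vanishes as $\mu \searrow 0$, the energies $E_{\lambda,\mu}(w^{\lambda,\mu}_j) \to 0$; boundedness in $W$ together with the \PS{} condition at level $0$ then yields $w^{\lambda,\mu}_j \to 0$ as $\mu \searrow 0$.
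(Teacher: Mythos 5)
Your overall strategy — link via eigensets of index $\ge k$, scale with $(w,t)\mapsto(t^{1/p}u,t^{1/q}v)$, bound the max by Hölder and the hypothesis \eqref{112} so it falls below $c^\ast$, then invoke a local \PS{} condition — is the same as the paper's (which applies Corollary \ref{Corollary 306} rather than Theorem \ref{Theorem 305} directly, a cosmetic difference), and your \PS{} discussion and your $\sup_{w\in X}E(w)$ estimate both land on essentially the correct quantities.

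However, there is a genuine gap at the step where you assert that ``a small symmetric sphere provides the standard near-origin bound $E_{\lambda,\mu}\ge\beta>0$ via Sobolev embedding and the strict subcriticality of the two coupling integrals.'' This cannot be a standard bound here, because the $\lambda$ coupling term is \emph{not} subcritical with respect to the scaling: since $\eqref{110}$ gives $a/p+b/q=1$, the term $\lambda J(w_t)=\lambda t\,J(w)$ scales at exactly the same rate $t$ as $I(w_t)=t\,I(w)$. For $\lambda>\lambda_k\ge\lambda_1$ the set $\{w\in\M^+:\Psi(w)<\lambda\}$ is nonempty, and on it $I(w_t)-\lambda J(w_t)=t(1-\lambda/\Psi(w))<0$ for all $t>0$; no smallness of $\rho$ and no Sobolev inequality will make $I-\lambda J$ alone positive there. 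The positivity near the origin must come entirely from the $+\mu K$ term, which is genuinely sub-scaling ($\kappa<1$), and the nontrivial lemma you are missing is precisely that $K(w)$ is bounded away from zero on $\M^+\setminus\Psi_{\overline\lambda}$ for a fixed $\overline\lambda>\lambda$ (see display \eqref{509} in the paper). That lower bound is what the paper proves with the interpolation
\[
|u|^a|v|^b=\left(|u|^c|v|^d\right)^\theta|u|^{a-\theta c}|v|^{b-\theta d}
\]
for a suitable $\theta\in(0,1)$, combined with the boundedness of the critical integrals on $\M$ and the bound $J(w)>1/\overline\lambda$ there. Without that lemma, your geometry argument does not close, because you cannot verify $\inf_{\M_\rho}E>0$ when $\lambda>\lambda_k$, which is the whole point of the theorem applying for \emph{all} $\lambda>\lambda_k$. (A secondary, more minor imprecision: you should decompose $\M_\rho$ into the three regions corresponding to $\Psi\ge\overline\lambda$, $\Psi<\overline\lambda$ inside $\M^+$, and $\M\setminus\M^+$, as the paper does in \eqref{508}; only the middle region needs the $K$-lower bound.)
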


\begin{remark}
A similar result for a single equation was proved in El Manouni and Perera \cite{MaPe5}.
\end{remark}

\subsection{Nonlocal systems} \label{Subsection 1.5}

Finally we briefly indicate how our results can be extended to the nonlocal critical growth anisotropic quasilinear elliptic system
\begin{equation} \label{117}
\left\{\begin{aligned}
(- \Delta)_p^s\, u & = \frac{\lambda a}{a + b}\, |u|^{a-2}\, |v|^b\, u + |u|^{p_s^\ast - 2}\, u && \text{in } \Omega\\[10pt]
(- \Delta)_q^r\, v & = \frac{\lambda b}{a + b}\, |u|^a\, |v|^{b-2}\, v + |v|^{q_r^\ast - 2}\, v && \text{in } \Omega\\[10pt]
u = v & = 0 && \text{in } \R^N \setminus \Omega,
\end{aligned}\right.
\end{equation}
where $\Omega$ is a bounded domain in $\R^N$ with Lipschitz boundary, $(- \Delta)_p^s$ is the fractional $p$\nobreakdash-Laplacian operator defined on smooth functions by
\[
(- \Delta)_p^s\, u(x) = 2 \lim_{\eps \searrow 0} \int_{\R^N \setminus B_\eps(x)} \frac{|u(x) - u(y)|^{p-2}\, (u(x) - u(y))}{|x - y|^{N+sp}}\, dy, \quad x \in \R^N,
\]
$r, s \in (0,1)$, $1 < p < N/s$ and $1 < q < N/r$, $p_s^\ast = Np/(N - sp)$ and $q_r^\ast = Nq/(N - rq)$ are the fractional critical Sobolev exponents, $a, b > 1$ satisfy
\begin{equation} \label{120}
\frac{a}{p_0} + \frac{b}{q_0} = 1
\end{equation}
for some $p_0 \in (1,p_s^\ast)$ and $q_0 \in (1,q_r^\ast)$, and $\lambda > 0$ is a parameter.

Let
\[
[u]_{s,\,p} = \left(\int_{\R^{2N}} \frac{|u(x) - u(y)|^p}{|x - y|^{N+sp}}\, dx dy\right)^{1/p}
\]
be the Gagliardo seminorm of a measurable function $u : \R^N \to \R$ and let
\[
W^{s,\,p}(\R^N) = \set{u \in L^p(\R^N) : [u]_{s,\,p} < \infty}
\]
be the fractional Sobolev space endowed with the norm
\[
\norm[s,\,p]{u} = \left(\pnorm{u}^p + [u]_{s,\,p}^p\right)^{1/p},
\]
where $\pnorm{\cdot}$ denotes the norm in $L^p(\R^N)$. Recall that
\[
W^{s,\,p}_0(\Omega) = \set{u \in W^{s,\,p}(\R^N) : u = 0 \text{ a.e.\! in } \R^N \setminus \Omega},
\]
equivalently renormed by setting $\norm[W^{s,\,p}_0(\Omega)]{\cdot} = [\cdot]_{s,\,p}$.

We work in the product space
\[
W = W^{s,\,p}_0(\Omega) \times W^{r,\,q}_0(\Omega) = \set{w = (u,v) : u \in W^{s,\,p}_0(\Omega),\, v \in W^{r,\,q}_0(\Omega)}
\]
endowed with the norm
\[
\norm{w} = \norm[W^{s,\,p}_0(\Omega)]{u} + \norm[W^{r,\,q}_0(\Omega)]{v}.
\]
The variational functional associated with system \eqref{117} is
\begin{multline} \label{121}
E(w) = \int_{\R^{2N}} \left(\frac{1}{p}\, \frac{|u(x) - u(y)|^p}{|x - y|^{N+sp}} + \frac{1}{q}\, \frac{|v(x) - v(y)|^q}{|x - y|^{N+rq}}\right) dx dy - \frac{\lambda}{a + b} \int_\Omega |u|^a\, |v|^b\, dx\\[7.5pt]
- \int_\Omega \left(\frac{1}{p_s^\ast}\, |u|^{p_s^\ast} + \frac{1}{q_r^\ast}\, |v|^{q_r^\ast}\right) dx, \quad w \in W.
\end{multline}
With respect to the scaling $W \times [0,\infty) \to W,\, (w,t) \mapsto w_t = (t^{1/p}\, u,t^{1/q}\, v)$, we have
\begin{multline*}
E(w_t) = t \int_{\R^{2N}} \left(\frac{1}{p}\, \frac{|u(x) - u(y)|^p}{|x - y|^{N+sp}} + \frac{1}{q}\, \frac{|v(x) - v(y)|^q}{|x - y|^{N+rq}}\right) dx dy - \frac{\lambda t^\gamma}{a + b} \int_\Omega |u|^a\, |v|^b\, dx\\[7.5pt]
- \int_\Omega \left(\frac{t^{p_s^\ast/p}}{p_s^\ast}\, |u|^{p_s^\ast} + \frac{t^{q_r^\ast/q}}{q_r^\ast}\, |v|^{q_r^\ast}\right) dx \quad \forall w \in W,\, t \ge 0,
\end{multline*}
where $\gamma = a/p + b/q$. As in the local case, we obtain different types of multiplicity results in the three cases $\gamma = 1$, $\gamma > 1$, and $\gamma < 1$.

Let
\[
S_{p,s} = \inf_{u \in \dot{W}^{s,\,p}(\R^N) \setminus \set{0}}\, \frac{\dint_{\R^{2N}} \frac{|u(x) - u(y)|^p}{|x - y|^{N+sp}}\, dx dy}{\left(\dint_{\R^N} |u|^{p_s^\ast}\, dx\right)^{p/p_s^\ast}}
\]
be the best fractional Sobolev constant, where
\[
\dot{W}^{s,\,p}(\R^N) = \set{u \in L^{p_s^\ast}(\R^N) : [u]_{s,\,p} < \infty}
\]
endowed with the norm $\norm[\dot{W}^{s,\,p}(\R^N)]{\cdot} = [\cdot]_{s,\,p}$, and let
\[
c^\ast = \frac{1}{N}\, \min \set{s\, S_{p,s}^{N/sp},r\, S_{q,r}^{N/rq}}.
\]
Then an argument similar to that in the proof of Lemma \ref{Lemma 201} shows that the functional $E$ in \eqref{121} satisfies the \PS{c} condition for all $c < c^\ast$ if $\gamma \ge 1$.

When $\gamma = 1$, the corresponding eigenvalue problem is
\begin{equation} \label{119}
\left\{\begin{aligned}
(- \Delta)_p^s\, u & = \frac{\lambda a}{a + b}\, |u|^{a - 2}\, |v|^b\, u && \text{in } \Omega\\[10pt]
(- \Delta)_q^r\, v & = \frac{\lambda b}{a + b}\, |u|^a\, |v|^{b - 2}\, v && \text{in } \Omega\\[10pt]
u = v & = 0 && \text{in } \R^N \setminus \Omega.
\end{aligned}\right.
\end{equation}
Let
\begin{multline*}
I(w) = \int_{\R^{2N}} \left(\frac{1}{p}\, \frac{|u(x) - u(y)|^p}{|x - y|^{N+sp}} + \frac{1}{q}\, \frac{|v(x) - v(y)|^q}{|x - y|^{N+rq}}\right) dx dy,\\[7.5pt]
J(w) = \frac{1}{a + b} \int_\Omega |u|^a\, |v|^b\, dx, \quad w \in W,
\end{multline*}
let $\M = \set{w \in W : I(w) = 1}$, and let $\M^+ = \set{w \in \M : J(w) > 0}$. Then eigenvalues of problem \eqref{119} coincide with critical values of the functional
\[
\Psi(w) = \frac{1}{J(w)}, \quad w \in \M^+,
\]
and Theorem \ref{Theorem 401} holds for this eigenvalue problem also. Let
\[
\delta = \frac{a + b}{a^{a/p_s^\ast}\, b^{b/q_r^\ast}} \left(\frac{\zeta}{N \vol{\Omega}}\, \min \set{s\, S_{p,s}^{N/sp},r\, S_{q,r}^{N/rq}}\right)^{1/\zeta},
\]
where $1/\zeta = 1 - a/p_s^\ast - b/q_r^\ast$. Arguing as in the local case, we can prove the following theorems and corollaries.

\begin{theorem}
Assume that $\gamma = 1$. If $\lambda_k = \cdots = \lambda_{k+m-1} < \lambda_{k+m}$ for some $k, m \ge 1$, then for all $\lambda \in (\lambda_k - \delta,\lambda_k)$, system \eqref{117} has $m$ distinct pairs of nontrivial solutions $\pm w^\lambda_j,\, j = 1,\dots,m$ at positive energy levels such that each $w^\lambda_j \to 0$ as $\lambda \nearrow \lambda_k$.
\end{theorem}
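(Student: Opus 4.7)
The plan is to adapt the proof of Theorem \ref{Theorem 101} line by line to the nonlocal setting. The paper has already flagged that the nonlocal functional $E$ in \eqref{121} satisfies the local \PS{c} condition for all $c<c^\ast$ via an argument parallel to Lemma \ref{Lemma 201}, and that Theorem \ref{Theorem 401} is available for the nonlocal eigenvalue problem \eqref{119}. My task therefore reduces to feeding these two ingredients into the abstract multiplicity result Theorem \ref{Theorem 305}, exactly as in the local case.

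The first step is to harvest the linking data from \eqref{119}. The hypothesis $\lambda_k=\cdots=\lambda_{k+m-1}<\lambda_{k+m}$, together with the piercing property $(i_7)$ of the $\Z_2$-cohomological index from Proposition \ref{Proposition 303}, produces two symmetric subsets of $\M^+$ whose cohomological indices differ by $m$ across the level $1/\lambda_k$ of $\Psi=1/J$. Pushing these sets into $W$ via the scaling $w\mapsto w_t=(t^{1/p}\,u,t^{1/q}\,v)$ and using $\gamma=1$ gives, for $w\in\M$,
\[
E(w_t)=t\bigl(I(w)-\lambda\,J(w)\bigr)-\int_\Omega\left(\frac{t^{p_s^\ast/p}}{p_s^\ast}|u|^{p_s^\ast}+\frac{t^{q_r^\ast/q}}{q_r^\ast}|v|^{q_r^\ast}\right)dx,
\]
and for $\lambda<\lambda_k$ the factor $I-\lambda J$ is strictly positive on the index-$\ge k$ side (producing a small positive sublevel near $0$) while the other side sits in a negative sublevel after a large-$t$ dilation. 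This is precisely the symmetric geometry required by Theorem \ref{Theorem 305}, which then yields $m$ candidate critical values $0<c_1^\lambda\le\cdots\le c_m^\lambda$.

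The decisive technical step, and the one I expect to be the main obstacle, is the quantitative bound $c_m^\lambda<c^\ast$ for every $\lambda\in(\lambda_k-\delta,\lambda_k)$, which is exactly what activates the sub-$c^\ast$ compactness. Maximizing $E(w_t)$ in $t\ge 0$ at a fixed $w\in\M$, one bounds $\int_\Omega|u|^a|v|^b\,dx$ by the Hölder/Young chain \eqref{103}, applies the fractional Sobolev inequalities with best constants $S_{p,s}$ and $S_{q,r}$, and uses the normalization $I(w)=1$. The resulting elementary algebra — identical to the local computation with $p^\ast,q^\ast,S_p,S_q$ replaced by $p_s^\ast,q_r^\ast,S_{p,s},S_{q,r}$ and with the extra factor of $s$ or $r$ absorbed into $c^\ast$ — reproduces the quantity $\delta$ displayed above and shows that $\delta$ is exactly the margin needed to keep every min-max level strictly below $c^\ast=\tfrac{1}{N}\min\{s\,S_{p,s}^{N/sp},r\,S_{q,r}^{N/rq}\}$. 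Theorem \ref{Theorem 305} then produces the $m$ distinct pairs $\pm w_j^\lambda$ of nontrivial critical points at positive energy. Finally, since the min-max widths collapse as $\lambda\nearrow\lambda_k$, we have $c_j^\lambda\searrow 0$, and the same sub-$c^\ast$ compactness promotes this to $w_j^\lambda\to 0$ in $W$, completing the proof.
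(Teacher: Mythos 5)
Your plan is exactly the paper's: the paper dispatches the entire nonlocal section with ``arguing as in the local case,'' and you are proposing to run the proof of Theorem \ref{Theorem 101} with $p^\ast,q^\ast,S_p,S_q,\delta,c^\ast$ replaced by their fractional counterparts, feeding the nonlocal local \PS{c} condition and the nonlocal version of Theorem \ref{Theorem 401} into Theorem \ref{Theorem 305}. That is correct and is the same route.

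Two small mischaracterizations of the mechanics, neither fatal but worth fixing before you write details. First, the linking data do not come from $(i_7)$: the sets $A_0$ and $B_0$ come from Theorem \ref{Theorem 401} \ref{Theorem 401.iii} (plus the Degiovanni--Lancelotti extraction of a compact symmetric subset of full index), and $(i_7)$ is invoked only \emph{inside} the proof of Theorem \ref{Theorem 305}. Second, and more substantively, the upper bound $\sup_X E<c^\ast$ is not obtained by maximizing $E(w_t)$ in $t$ and applying the Sobolev inequality; the paper uses $\Psi(\widetilde w)<\lambda_k+\eps_\lambda$ to get $I(w)\le(\lambda_k+\eps_\lambda)J(w)$, bounds $J(w)$ by a H\"older inequality with the exponent $\zeta$ from \eqref{113} purely in terms of $\sigma=\int_\Omega|u|^{p_s^\ast}dx$ and $\tau=\int_\Omega|v|^{q_r^\ast}dx$, and then maximizes the resulting scalar expression over $\sigma,\tau\ge0$; the Sobolev constants enter only through $c^\ast$ and hence through the definition of $\delta$. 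Finally, the convergence $w_j^\lambda\to0$ is not proved by invoking sub-$c^\ast$ compactness (which is for a fixed $\lambda$, not for the varying family $E_\lambda$); it is a direct computation: from $E(w_j^\lambda)=\o(1)$ and the Euler--Lagrange identities tested against $(u_j^\lambda,0)$ and $(0,v_j^\lambda)$, subtracting the $1/p$ and $1/q$ multiples and using $\gamma=1$ kills the gradient and coupling terms and leaves $\frac1N\int_\Omega(|u_j^\lambda|^{p_s^\ast}+|v_j^\lambda|^{q_r^\ast})\,dx=\o(1)$, after which H\"older controls the coupling term and the energy identity gives $\|w_j^\lambda\|\to0$. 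Adopt that direct argument rather than appealing to a compactness that, as stated, does not apply across the $\lambda$-family.
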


\begin{corollary}
If $\gamma = 1$, then for each $k \ge 1$, system \eqref{117} has a nontrivial solution for all $\lambda \in (\lambda_k - \delta,\lambda_k)$.
\end{corollary}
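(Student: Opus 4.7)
The plan is to reduce this statement to the preceding Theorem by the standard trick of passing to the top of a cluster of coincident eigenvalues. Fix $k \ge 1$. Because the sequence $\seq{\lambda_k}$ constructed in Theorem \ref{Theorem 401} is nondecreasing and tends to infinity, the set
\[
\set{j \ge k : \lambda_j = \lambda_k}
\]
is nonempty and finite, so it admits a maximum $k'$. By construction $\lambda_{k'} = \lambda_k$ and $\lambda_{k'} < \lambda_{k'+1}$, so the hypothesis of the preceding Theorem is satisfied at the index $k'$ with multiplicity $m = 1$.

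Applying that Theorem to system \eqref{117} with $k$ replaced by $k'$ and $m = 1$ produces a pair $\pm w^\lambda$ of nontrivial solutions at a positive energy level for every $\lambda \in (\lambda_{k'} - \delta, \lambda_{k'})$. Since $\lambda_{k'} = \lambda_k$, this interval coincides with $(\lambda_k - \delta, \lambda_k)$, which is precisely what the corollary asserts.

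No substantive obstacle is expected: the corollary is a direct specialization of the preceding Theorem to the case $m = 1$, coupled with the elementary observation that every value in the nondecreasing divergent sequence $\seq{\lambda_k}$ has a well-defined top index at which the sequence strictly increases. The only small point worth emphasizing in writing it up is that replacing $k$ by $k'$ does not shrink or shift the $\lambda$-interval claimed in the statement, which is why a single application of the Theorem suffices for every $k \ge 1$.
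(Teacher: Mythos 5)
Your proposal is correct and is essentially the paper's own argument: the paper justifies this corollary with the same observation (since $\lambda_k \nearrow \infty$, by taking $k$ larger if necessary one may assume $\lambda_k < \lambda_{k+1}$, which leaves the interval $(\lambda_k - \delta,\lambda_k)$ unchanged because the replaced index lies in the same eigenvalue cluster), and then applies the preceding theorem with $m = 1$. Your write-up merely makes the choice of the top index $k'$ of the cluster explicit, which is a fine way to present the same reduction.
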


\begin{theorem}
Assume that \eqref{120} holds and $\gamma > 1$. Then for any $m \ge 1$, $\exists \lambda_m^\ast > 0$ such that for all $\lambda > \lambda_m^\ast$, system \eqref{117} has $m$ distinct pairs of nontrivial solutions $\pm w^\lambda_j,\, j = 1,\dots,m$ at positive energy levels such that each $w^\lambda_j \to 0$ as $\lambda \to \infty$. In particular, the number of solutions goes to infinity as $\lambda \to \infty$.
\end{theorem}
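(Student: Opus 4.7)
The plan is to mirror the proof of Theorem~\ref{Theorem 104}, replacing each local ingredient by its fractional counterpart. The functional $E$ in \eqref{121} is $C^1$ and even on $W$ with $E(0)=0$, and the excerpt already supplies the local \PS{c} condition for every $c<c^\ast$ by analogy with Lemma~\ref{Lemma 201}. It therefore suffices to produce $m$ pairs of nontrivial critical points at positive levels lying in $(0,c^\ast)$, at which point the abstract Theorem~\ref{Theorem 305} applied on the sublevel $\set{E\le c^\ast}$ delivers the conclusion.

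First I would build a symmetric compact set $V_m\subset W$ of $\Z_2$-cohomological index at least $m$ on which $I$ is bounded above and $J(w)=\frac{1}{a+b}\int_\Omega|u|^a|v|^b\,dx$ is bounded below by a positive constant. Choose $m$ pairwise disjoint open balls $B_1,\dots,B_m\subset\Omega$ of equal radius and test pairs $\varphi_j=(u_j,v_j)\in C_c^\infty(B_j)\times C_c^\infty(B_j)$ normalised so that $J(\varphi_j)=1$; let $V_m$ be the image of the unit sphere of $\R^m$ under the odd continuous map $(\xi_1,\dots,\xi_m)\mapsto\sum_{j=1}^m\xi_j\,\varphi_j$. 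Disjointness of supports makes $J$ additive on $V_m$, giving $J\ge\kappa_m>0$ there, and the fractional seminorms stay bounded on $V_m$ once the cross-kernel integrals $\iint_{B_j\times B_l}|x-y|^{-N-sp}\,dx\,dy$ for $j\ne l$ are absorbed into the diagonal contributions via $|\sum_j a_j|^p\le m^{p-1}\sum_j|a_j|^p$.

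Next, the scaling $w_t=(t^{1/p}u,t^{1/q}v)$ reduces the minimax estimate to explicit one-variable calculus. For every $w\in V_m$,
\[
E(w_t)=t\,I(w)-\lambda t^\gamma J(w)-\int_\Omega\left(\frac{t^{p_s^\ast/p}}{p_s^\ast}|u|^{p_s^\ast}+\frac{t^{q_r^\ast/q}}{q_r^\ast}|v|^{q_r^\ast}\right)dx.
\]
Since $\gamma>1$, the linear and critical terms cannot offset the $\lambda$-term once $\lambda$ is large; maximising in $t$ yields a constant $C_m>0$ depending only on the chosen bumps with $\sup_{w\in V_m,\,t\ge 0}E(w_t)\le C_m\,\lambda^{-1/(\gamma-1)}$. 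Setting $\lambda_m^\ast=(C_m/c^\ast)^{\gamma-1}$, for every $\lambda>\lambda_m^\ast$ the symmetric set $W_m=\set{w_t:w\in V_m,\,t\ge 0}$ is contained in $\set{E<c^\ast}$, and the odd map $(w,t)\mapsto w_t$ preserves the cohomological index, so $W_m$ still has index at least $m$.

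Theorem~\ref{Theorem 305}, combined with the piercing property of the $\Z_2$-cohomological index (Proposition~\ref{Proposition 303}), then produces $m$ symmetric minimax values $0<c^\lambda_1\le\cdots\le c^\lambda_m\le C_m\,\lambda^{-1/(\gamma-1)}$, each of which is a critical value of $E$ by the local \PS{c} condition. Since these values tend to $0$ as $\lambda\to\infty$ and the leading seminorm terms of $E$ are coercive near the origin, the associated critical points satisfy $\norm{w^\lambda_j}\to 0$. The main obstacle is the construction of $V_m$: unlike in the local case, the fractional seminorm is not additive on disjointly supported functions, so the upper bound on $I$ along $V_m$ requires an explicit estimate of the interaction integrals between the balls $B_j$. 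Once those estimates are in place, the rest of the argument is parallel to the local $\gamma>1$ proof.
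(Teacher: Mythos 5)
Your overall strategy — establish the fractional $(\text{PS})_c$ condition for $c<c^\ast$, then feed a compact symmetric set of index $m$ into the scaling machinery of Theorem~\ref{Theorem 305}/Corollary~\ref{Corollary 306} — is the same as the paper's, which simply transcribes the proof of Theorem~\ref{Theorem 104} with fractional ingredients. Where you genuinely diverge is in how you produce the set $A_0$. The paper gets a compact symmetric $A_0\subset\M^+$ with $i(A_0)\ge m$ and $J$ bounded below from the cohomological-index eigenvalue sequence (Theorem~\ref{Theorem 401}~\ref{Theorem 401.iii}) plus the Degiovanni--Lancelotti lemma that an open symmetric set of index $\ge m$ contains a compact symmetric subset of index $m$. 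You instead build $V_m$ explicitly as the image of $S^{m-1}$ under $\xi\mapsto\sum_j\xi_j\varphi_j$ with disjointly supported bump pairs; disjointness gives $J(\sum\xi_j\varphi_j)=\sum_j|\xi_j|^{a+b}$, bounded below on the sphere, and monotonicity $(i_2)$ gives $i(V_m)\ge i(S^{m-1})=m$. This is a correct and more elementary alternative for the $\gamma>1$ regime (any compact symmetric set of index $\ge m$ with $J$ bounded below works), it bypasses the eigenvalue theory entirely, and it yields an explicit $C_m$; it would not, however, transfer to the $\gamma=1$ theorem, where the location of $A_0$ relative to the sublevel sets of $\Psi$ is essential. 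Two points to tighten: Corollary~\ref{Corollary 306} requires $A_0\subset\M$, so you should take $A_0=\pi(V_m)$, the projection onto $\M$; this is an odd homeomorphism, so the index is preserved, and $J(\pi(w))=I(w)^{-\gamma}J(w)$ stays bounded below since $I$ is bounded above on the compact set $V_m$. And your closing inference that $w^\lambda_j\to 0$ from $E(w^\lambda_j)\to 0$ plus ``coercivity near the origin'' is not valid as stated, because $E$ is a signed sum of terms and $E\to 0$ alone forces none of them to vanish; the paper instead tests $E'(w^\lambda_j)$ against $(u^\lambda_j,0)$ and $(0,v^\lambda_j)$, divides by $p$ and $q$, and subtracts from $E(w^\lambda_j)=\mathrm{o}(1)$ to obtain
\[
\frac{\lambda(\gamma-1)}{a+b}\int_\Omega|u^\lambda_j|^a|v^\lambda_j|^b\,dx+\frac{1}{N}\int_\Omega\bigl(|u^\lambda_j|^{p_s^\ast}+|v^\lambda_j|^{q_r^\ast}\bigr)\,dx=\mathrm{o}(1),
\]
from which each lower-order integral vanishes and then $I(w^\lambda_j)=\mathrm{o}(1)$. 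You should include that step explicitly.
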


\begin{corollary}
If \eqref{120} holds for some $p_0 \in (p,p^\ast)$ and $q_0 \in (q,q^\ast)$, then for any $m \ge 1$, $\exists \lambda_m^\ast > 0$ such that for all $\lambda > \lambda_m^\ast$, system \eqref{117} has $m$ distinct pairs of nontrivial solutions $\pm w^\lambda_j,\, j = 1,\dots,m$ at positive energy levels such that each $w^\lambda_j \to 0$ as $\lambda \to \infty$. In particular, the number of solutions goes to infinity as $\lambda \to \infty$.
\end{corollary}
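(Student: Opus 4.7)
The plan is to derive this corollary as an immediate specialization of the preceding theorem (the nonlocal $\gamma > 1$ multiplicity statement for system \eqref{117}). All of the analytic content---the Palais--Smale analysis for the fractional functional \eqref{121} below the threshold $c^\ast$, the abstract critical point theorem for symmetric functionals on product spaces, and the geometry of $E$ along the dilation curves $t \mapsto w_t$---is already packaged inside that theorem. What remains is a purely arithmetic check on the exponents, so I would present the argument in two short steps.

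First, I would verify that the supplementary hypothesis $p_0 \in (p,p^\ast)$ and $q_0 \in (q,q^\ast)$, read in the nonlocal context with the fractional critical exponents $p_s^\ast$ and $q_r^\ast$ in place of $p^\ast$ and $q^\ast$, forces $\gamma > 1$. Since $a, b > 1$ and $p < p_0$, $q < q_0$, the two strict inequalities $a/p > a/p_0$ and $b/q > b/q_0$ hold; adding them and using the normalization \eqref{120} yields
\[
\gamma = \frac{a}{p} + \frac{b}{q} > \frac{a}{p_0} + \frac{b}{q_0} = 1.
\]
Second, with $\gamma > 1$ in hand, I would apply the preceding theorem directly with the prescribed $m \ge 1$ to extract the threshold $\lambda_m^\ast > 0$ and, for every $\lambda > \lambda_m^\ast$, the $m$ distinct pairs $\pm w_j^\lambda$ of nontrivial solutions of \eqref{117} at positive energy levels satisfying $w_j^\lambda \to 0$ as $\lambda \to \infty$. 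The \emph{in particular} clause is then immediate by letting $m$ be arbitrarily large.

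I do not expect a genuine analytic obstacle, since the corollary contains no content beyond translating the scale-invariant condition $\gamma > 1$ into the more concrete condition $p_0 > p$, $q_0 > q$. The only point deserving a word of caution is the notational one flagged above: the symbols $p^\ast$, $q^\ast$ appearing in the statement must be interpreted as the appropriate fractional critical Sobolev exponents of Section \ref{Subsection 1.5} in order for \eqref{120} to be consistent with the rest of the subsection.
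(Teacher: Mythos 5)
Your proof is correct and matches the paper's argument exactly: the paper derives this corollary, just as you do, by observing that $p_0 > p$ and $q_0 > q$ together with \eqref{120} force $\gamma = a/p + b/q > a/p_0 + b/q_0 = 1$, and then invoking the preceding nonlocal $\gamma > 1$ theorem with the given $m$. Your caution about reading $p^\ast$, $q^\ast$ in the statement as the fractional exponents $p_s^\ast$, $q_r^\ast$ is a fair observation about the paper's notation, though it does not affect the argument since only the lower bounds $p_0 > p$ and $q_0 > q$ are used to conclude $\gamma > 1$.
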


\begin{theorem}
Assume that \eqref{120} holds and $\gamma < 1$. Then $\exists \lambda_\ast > 0$ such that for all $\lambda \in (0,\lambda_\ast)$, system \eqref{117} has a sequence of solutions $\seq{w^\lambda_k}$ at negative energy levels such that each $w^\lambda_k \to 0$ as $\lambda \searrow 0$.
\end{theorem}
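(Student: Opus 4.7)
The plan is to mirror the local proof of Theorem \ref{Theorem 106} in the fractional setting. The two main ingredients are (i) the fractional analogue of Lemma \ref{Lemma 201} asserted in the text, which yields the \PS{c} condition for $E$ whenever $c < c^\ast$, and (ii) the abstract symmetric critical point theorem \ref{Theorem 305}, which requires only a local \PS{} condition. Because $E$ is unbounded below on $W$ (the negative critical terms dominate at large scales), the first step is to truncate.

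The first step is to modify the critical terms. Fix a smooth even cutoff $\chi:[0,\infty)\to[0,1]$ with $\chi=1$ on $[0,1]$ and $\chi=0$ on $[2,\infty)$, and for a scale $\rho=\rho(\lambda)$ to be chosen, replace the critical terms in \eqref{121} by $\chi(\norm{w}/\rho)$ times themselves, obtaining an even functional $\tilde E$. For $\lambda$ sufficiently small and $\rho$ matched to the natural scaling of the problem, $\tilde E$ is bounded below and coercive, and there exists $\alpha>0$ such that $\tilde E(w)<-\alpha$ forces $\norm{w}<\rho$, where $\tilde E=E$. At any level below $-\alpha$, the global \PS{} condition for $\tilde E$ follows from the fractional version of Lemma \ref{Lemma 201} applied to the untruncated region.

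Next I would construct, for each $k\ge 1$, a symmetric compact set $A_k\subset W\setminus\set{0}$ of $\Z_2$-cohomological index at least $k$ such that $\sup_{A_k}\tilde E<-\alpha$. Start from a compact subset $\Sigma_k\subset\M^+$ of index $\ge k$ furnished by the nonlocal analogue of Theorem \ref{Theorem 401} applied to the eigenvalue problem \eqref{119}; by compactness and positivity on $\M^+$, the integrals $\int_\Omega |u|^a|v|^b$, $\int_\Omega|u|^{p_s^\ast}$, and $\int_\Omega|v|^{q_r^\ast}$ are bounded above and below by positive constants depending on $k$ throughout $\Sigma_k$. The scaling $w\mapsto w_t=(t^{1/p}u,t^{1/q}v)$ then gives
\[
\tilde E(w_t)\le t - c_k\,\lambda\,t^\gamma + C_k\left(t^{p_s^\ast/p}+t^{q_r^\ast/q}\right), \quad w\in\Sigma_k,\ t\le\rho.
\]
Since $\gamma<1$ while $p_s^\ast/p, q_r^\ast/q>1$, choosing $t=t_k(\lambda)$ of order $\lambda^{1/(1-\gamma)}$ yields a uniform negative upper bound on $A_k=\set{w_{t_k(\lambda)}:w\in\Sigma_k}$ of order $-\lambda^{1/(1-\gamma)}$. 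The map $w\mapsto w_t$ is an odd homeomorphism, so $A_k$ inherits index at least $k$; for all $\lambda$ small enough, $\sup_{A_k}\tilde E<-\alpha$.

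Finally, set $c_k=\inf_{A\in\F_k}\sup_{w\in A}\tilde E(w)$, where $\F_k$ is the family of symmetric closed subsets of $W\setminus\set{0}$ of cohomological index at least $k$. By the piercing property $(i_7)$ of Proposition \ref{Proposition 303} and the abstract symmetric critical point theorem (Theorem \ref{Theorem 305}), each $c_k$ is a critical value of $\tilde E$, with coincidences $c_k=\cdots=c_{k+m-1}$ producing $m$ distinct pairs of critical points. Since $c_k\le\sup_{A_k}\tilde E<-\alpha$, the associated critical points are genuine solutions $w^\lambda_k$ of \eqref{117} at negative levels. The convergence $w^\lambda_k\to 0$ as $\lambda\searrow 0$ follows from the scaling estimate $|c_k(\lambda)|=O(\lambda^{1/(1-\gamma)})$ combined with the Nehari-type identity obtained by testing \eqref{117} against $(u,v)$, which controls $\norm{w^\lambda_k}$ in terms of $|c_k(\lambda)|$. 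The main obstacle is the simultaneous calibration of $\rho(\lambda)$ and the minimax levels so that the global \PS{} condition for $\tilde E$ coexists with $c_k<-\alpha$ for every $k$; this is achieved by coupling $\rho$ to the natural scaling of the problem and exploiting the strict inequality $\gamma<1$ to open a definite window between the truncation threshold and the minimax values.
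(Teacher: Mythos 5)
Your overall strategy (truncate, then run a cohomological-index minimax at negative levels) is the right one and matches the paper's intent, but the abstract machinery you invoke is wrong for this regime. Theorem~\ref{Theorem 305} is a mountain-pass-type result: its hypothesis \eqref{306} demands $0 < \inf_B E$, and its conclusion pins the critical values in $[\inf_B E, \sup_X E]$, which are \emph{positive}. In the case $\gamma < 1$ the functional satisfies $E(w_t) \le t - c\lambda t^\gamma + o(t)$ as $t \searrow 0$ on $\M^+$, so $E$ is \emph{negative} immediately near the origin along these cones; there is no sphere $\M_\rho$ on which $\inf E > 0$, and Theorem~\ref{Theorem 305} simply does not apply. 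The paper's proof of Theorem~\ref{Theorem 106} (which is what you are asked to transplant to \eqref{117}) does not use Theorem~\ref{Theorem 305} at all: it applies a Clark-type theorem for even functionals bounded below satisfying \PS{c} for $c<0$ (the reference is \cite[Proposition~3.36]{MR2640827}), with the cohomological-index families $\F_k$ replacing the genus. Your final paragraph, invoking $(i_7)$ and Theorem~\ref{Theorem 305} to deduce that each $c_k$ is critical, therefore rests on a theorem whose hypotheses fail here.

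A second, more technical gap is the fixed threshold $-\alpha$. You need $\sup_{A_k}\tilde E < -\alpha$ for every $k$, but your own estimate gives an upper bound of order $-(c_k'\lambda)^{1/(1-\gamma)}$, where $c_k' = \inf_{\Sigma_k} J$; since the $\Sigma_k$ grow in index, $c_k'$ may tend to $0$ and the negative levels accumulate at $0^-$, so no single $\alpha>0$ works for all $k$. The paper avoids this by not introducing a buffer: it truncates the \emph{entire} functional via $\widetilde E(w) = \xi_\lambda(I(w))\,E(w)$ with a cutoff in $I(w)$ calibrated so that $g_\lambda(I(w))>0$ precisely on the transition region $(R_1(\lambda),R_2(\lambda))$; then $\widetilde E(w)<0$ already forces $I(w)<R_1(\lambda)$, where $\widetilde E$ coincides with $E$ in a neighborhood, and \PS{c} is proved directly for every $c<0$ (for small $\lambda$). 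That cleaner calibration is what makes the full sequence $c_k\nearrow 0$ of negative critical values go through. Your convergence argument for $w^\lambda_k\to 0$ via a Nehari identity would also work, but the paper gets it for free from $I(w^\lambda_k) < R_1(\lambda)$ and $R_1(\lambda)\to 0$.
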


\begin{corollary}
If \eqref{120} holds for some $p_0 \in (1,p)$ and $q_0 \in (1,q)$, then $\exists \lambda_\ast > 0$ such that for all $\lambda \in (0,\lambda_\ast)$, system \eqref{117} has a sequence of solutions $\seq{w^\lambda_k}$ at negative energy levels such that each $w^\lambda_k \to 0$ as $\lambda \searrow 0$.
\end{corollary}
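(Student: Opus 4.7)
The plan is to observe that this corollary follows immediately from the preceding $\gamma<1$ theorem for the nonlocal system \eqref{117}, once we verify that the hypothesis $p_0 \in (1,p)$, $q_0 \in (1,q)$ forces $\gamma < 1$. So the proof will be a one-step deduction, with the only real content being a short monotonicity argument on the constraint \eqref{120}.

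First I would record the inequality. Since $p_0 < p$ and $q_0 < q$, we have $a/p < a/p_0$ and $b/q < b/q_0$ (as $a,b > 1 > 0$), hence
\[
\gamma \;=\; \frac{a}{p} + \frac{b}{q} \;<\; \frac{a}{p_0} + \frac{b}{q_0} \;=\; 1
\]
by \eqref{120}. Thus the hypotheses of the nonlocal $\gamma < 1$ theorem stated just above are satisfied: \eqref{120} holds and $\gamma < 1$.

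Invoking that theorem directly yields the existence of $\lambda_\ast > 0$ such that for every $\lambda \in (0,\lambda_\ast)$, system \eqref{117} has a sequence of solutions $\seq{w^\lambda_k}$ at negative energy levels with $w^\lambda_k \to 0$ as $\lambda \searrow 0$, which is the statement of the corollary. Since this is a purely algebraic reduction to a theorem already available at this point of the paper, there is no substantive obstacle; the only thing to be careful about is the strict inequalities $p_0 < p$ and $q_0 < q$ — both are strict, which yields the strict bound $\gamma < 1$ needed to apply the theorem (rather than merely $\gamma \le 1$, which would not suffice).
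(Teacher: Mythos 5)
Your proposal is correct and matches the paper's approach exactly: the paper derives the analogous local corollary by noting $p_0 < p$ and $q_0 < q$ force $\gamma = a/p + b/q < a/p_0 + b/q_0 = 1$, and the nonlocal corollary follows from the preceding nonlocal $\gamma < 1$ theorem by the same one-line computation. Your observation that strict inequalities $p_0 < p$, $q_0 < q$ are needed for the strict bound $\gamma < 1$ is also accurate.
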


\section{An abstract multiplicity result in product spaces} \label{Section 3}

In this section we prove the abstract critical point theorem in product spaces that we will use to prove our multiplicity results. This theorem only requires a local \PS{} condition and is therefore applicable to problems with critical growth. It will be based on the $\Z_2$-cohomological index of Fadell and Rabinowitz \cite{MR0478189}.

\begin{definition}[see \cite{MR0478189}] \label{Definition 301}
Let $\A$ denote the class of symmetric subsets of $W \setminus \set{0}$. For $A \in \A$, let $\overline{A} = A/\Z_2$ be the quotient space of $A$ with each $u$ and $-u$ identified, let $f : \overline{A} \to \RP^\infty$ be the classifying map of $\overline{A}$, and let $f^\ast : H^\ast(\RP^\infty) \to H^\ast(\overline{A})$ be the induced homomorphism of the Alexander-Spanier cohomology rings. The cohomological index of $A$ is defined by
\[
i(A) = \begin{cases}
0 & \text{if } A = \emptyset\\[5pt]
\sup \set{m \ge 1 : f^\ast(\omega^{m-1}) \ne 0} & \text{if } A \ne \emptyset,
\end{cases}
\]
where $\omega \in H^1(\RP^\infty)$ is the generator of the polynomial ring $H^\ast(\RP^\infty) = \Z_2[\omega]$.
\end{definition}

\begin{example}
The classifying map of the unit sphere $S^N$ in $\R^{N+1},\, N \ge 0$ is the inclusion $\RP^N \incl \RP^\infty$, which induces isomorphisms on the cohomology groups $H^l$ for $l \le N$, so $i(S^N) = N + 1$.
\end{example}

The following proposition summarizes the basic properties of the cohomological index.

\begin{proposition}[see \cite{MR0478189}] \label{Proposition 303}
The index $i : \A \to \N \cup \set{0,\infty}$ has the following properties:
\begin{enumerate}
\item[$(i_1)$] Definiteness: $i(A) = 0$ if and only if $A = \emptyset$.
\item[$(i_2)$] Monotonicity: If there is an odd continuous map from $A$ to $B$ (in particular, if $A \subset B$), then $i(A) \le i(B)$. Thus, equality holds when the map is an odd homeomorphism.
\item[$(i_3)$] Dimension: $i(A) \le \dim W$.
\item[$(i_4)$] Continuity: If $A$ is closed, then there is a closed neighborhood $N \in \A$ of $A$ such that $i(N) = i(A)$. When $A$ is compact, $N$ may be chosen to be a $\delta$-neighborhood $N_\delta(A) = \set{u \in W : \dist{u}{A} \le \delta}$.
\item[$(i_5)$] Subadditivity: If $A$ and $B$ are closed, then $i(A \cup B) \le i(A) + i(B)$.
\item[$(i_6)$] Stability: If $\Sigma A$ is the suspension of $A \ne \emptyset$, obtained as the quotient space of $A \times [-1,1]$ with $A \times \set{1}$ and $A \times \set{-1}$ collapsed to different points, then $i(\Sigma A) = i(A) + 1$.
\item[$(i_7)$] Piercing property: If $C$, $C_0$, and $C_1$ are closed and $\varphi : C \times [0,1] \to C_0 \cup C_1$ is a continuous map such that $\varphi(-u,t) = - \varphi(u,t)$ for all $(u,t) \in C \times [0,1]$, $\varphi(C \times [0,1])$ is closed, $\varphi(C \times \set{0}) \subset C_0$, and $\varphi(C \times \set{1}) \subset C_1$, then $i(\varphi(C \times [0,1]) \cap C_0 \cap C_1) \ge i(C)$.
\item[$(i_8)$] Neighborhood of zero: If $U$ is a bounded closed symmetric neighborhood of $0$, then $i(\bdry{U}) = \dim W$.
\end{enumerate}
\end{proposition}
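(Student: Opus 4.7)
The plan is to verify the eight properties using the standard machinery: uniqueness up to homotopy of classifying maps into $\RP^\infty$, the graded ring $H^\ast(\RP^\infty;\Z_2) = \Z_2[\omega]$ with its cup product, and Alexander--Spanier cohomology on closed subsets. The definitional property $(i_1)$ is immediate since $f^\ast(\omega^0) = 1 \neq 0$ when $A \neq \emptyset$. For monotonicity $(i_2)$, an odd continuous map $g : A \to B$ descends to $\bar g : \overline{A} \to \overline{B}$ and uniqueness of classifying maps gives $f_A \simeq f_B \circ \bar g$, so $f_A^\ast = \bar g^\ast \circ f_B^\ast$ and vanishing on the right forces vanishing on the left. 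Dimension $(i_3)$ and neighborhood of zero $(i_8)$ then reduce to $(i_2)$ via the odd radial map $u \mapsto u/\norm{u}$ onto $S^{\dim W - 1}$ (an odd homeomorphism on $\bdry{U}$ through the Minkowski functional), using $i(S^{n-1}) = n$ from the Example. Stability $(i_6)$ follows from $\overline{\Sigma A} = \Sigma \overline{A}$ and the suspension isomorphism, which raises the degree of the nonzero class by exactly one. Continuity $(i_4)$ comes from the tautness of Alexander--Spanier cohomology on closed subsets of paracompact spaces, combined with a standard ANR retraction argument in the compact case to obtain the $\delta$-tube form.

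For subadditivity $(i_5)$, set $m = i(A)$ and $n = i(B)$. The classes $f_{A \cup B}^\ast(\omega^m)$ and $f_{A \cup B}^\ast(\omega^n)$ restrict to zero on $\overline{A}$ and $\overline{B}$ respectively, so the long exact sequences of the pairs produce lifts $\alpha \in H^m(\overline{A \cup B},\overline{A})$ and $\beta \in H^n(\overline{A \cup B},\overline{B})$. The cup product $\alpha \smile \beta$ lives in $H^{m+n}(\overline{A \cup B}, \overline{A \cup B}) = 0$, and its image in $H^{m+n}(\overline{A \cup B})$ coincides with $f_{A \cup B}^\ast(\omega^{m+n})$ by multiplicativity of the pullback. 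Thus $f_{A \cup B}^\ast(\omega^{m+n}) = 0$, giving $i(A \cup B) \leq m + n$.

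The main obstacle is the piercing property $(i_7)$, which requires a lower bound rather than an upper bound on the index and is precisely what distinguishes the cohomological index from the Krasnoselskii genus. Let $E_i = \varphi^{-1}(C_i)$ for $i = 0, 1$; these are closed and symmetric in $C \times [0,1]$ (symmetry from oddness of $\varphi$ in its first argument), cover $C \times [0,1]$, and contain $C \times \set{i}$. By monotonicity applied to the odd restriction $\varphi\big|_{E_0 \cap E_1} : E_0 \cap E_1 \to \varphi(C \times [0,1]) \cap C_0 \cap C_1$, it suffices to show $i(E_0 \cap E_1) \ge i(C)$. This lower bound is extracted from a Mayer--Vietoris argument on the symmetric closed cover $\set{E_0, E_1}$ combined with the homotopy equivalence $C \times [0,1] \simeq C$. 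Via this equivalence $i(C \times [0,1]) = i(C)$, and the index class $f_C^\ast(\omega^{i(C)-1})$ pulls back nontrivially through the connecting homomorphism to $\overline{E_0 \cap E_1}$, essentially because neither $E_0$ nor $E_1$ alone can support the full index without the piercing intersection contributing. Making this precise requires careful naturality with classifying maps and is the technical heart of the Fadell--Rabinowitz construction.
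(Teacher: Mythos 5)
The paper does not prove this proposition at all; it is stated as a summary of known facts with a citation to Fadell--Rabinowitz, so there is no internal proof to compare against. That said, your sketches for $(i_1)$--$(i_5)$ and $(i_8)$ are along the standard lines (functoriality and homotopy-uniqueness of classifying maps for $(i_1)$--$(i_3)$ and $(i_8)$, tautness of Alexander--Spanier cohomology for $(i_4)$, the Lusternik--Schnirelmann cup-product argument with relative classes for $(i_5)$), and they hold up.

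Two of your sketches have real problems. For $(i_6)$, the identity $\overline{\Sigma A} = \Sigma\overline{A}$ you invoke is false: the free $\Z_2$-action on $\Sigma A$ must swap the two cone points (otherwise it has fixed points), so $\overline{\Sigma A}$ is the mapping cone of the double cover $A \to \overline{A}$, not the suspension of $\overline{A}$, and the ordinary suspension isomorphism does not apply. The standard route is instead via the join: $\Sigma A \cong A * S^0$ as free $\Z_2$-spaces, and one proves $i(A * B) = i(A) + i(B)$ using the Gysin (or join) structure on classifying spaces, then takes $B = S^0$ with $i(S^0)=1$.

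For $(i_7)$ your reduction is correct --- set $E_i = \varphi^{-1}(C_i)$, note $\varphi$ restricts to an odd continuous map $E_0\cap E_1 \to \varphi(C\times[0,1])\cap C_0\cap C_1$, and by monotonicity it suffices to show $i(E_0\cap E_1)\ge i(C)$. But the key step is left vague, and your description of the mechanism is off: the Mayer--Vietoris connecting homomorphism goes $H^{n-1}(\overline{Z}) \to H^n(\overline{Y})$, not from $\overline{Y}$ back to $\overline{Z}$, so the index class cannot ``pull back through the connecting homomorphism.'' The argument that actually closes the gap runs as follows. Write $Y = C\times[0,1]$, $Z = E_0\cap E_1$, and $m = i(C) = i(Y)$ (by the homotopy equivalence $C\times\{1\}\incl Y$), and suppose for contradiction $\omega_Z^{m-1}=0$. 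The exact sequence of the pair $(\overline{E_1},\overline{Z})$ yields a lift of $\omega_{E_1}^{m-1}$ to $H^{m-1}(\overline{E_1},\overline{Z})\cong H^{m-1}(\overline{Y},\overline{E_0})$ by excision; its image $\tilde\beta\in H^{m-1}(\overline{Y})$ restricts to $\omega_{E_1}^{m-1}$ on $\overline{E_1}$ and to $0$ on $\overline{E_0}$. Since the restriction $H^{m-1}(\overline{Y})\to H^{m-1}(\overline{C\times\{1\}})$ is an isomorphism and $C\times\{1\}\subset E_1$, both $\tilde\beta$ and $\omega_Y^{m-1}$ restrict to $\omega_{C\times\{1\}}^{m-1}$ there, forcing $\tilde\beta = \omega_Y^{m-1}$. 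But then $0 = \tilde\beta|_{\overline{E_0}} = \omega_Y^{m-1}|_{\overline{E_0}} = \omega_{E_0}^{m-1}$, contradicting $i(E_0)\ge i(C\times\{0\}) = m$. This asymmetric use of excision plus the pinning down of $\tilde\beta$ by the homotopy equivalence is exactly what your ``neither $E_0$ nor $E_1$ alone can support the full index'' remark is gesturing at, but without it the piercing property is not established.
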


To state our abstract multiplicity result, let $(W_1,\norm[1]{\cdot})$ and $(W_2,\norm[2]{\cdot})$ be Banach spaces and let
\[
W = W_1 \times W_2 = \set{w = (u,v) : u \in W_1,\, v \in W_2}
\]
be their product endowed with the norm
\[
\norm{w} = \norm[1]{u} + \norm[2]{v}.
\]
Consider the scaling
\[
W \times [0,\infty) \to W, \quad (w,t) \mapsto w_t = (t^\mu u,t^\nu v),
\]
where $\mu, \nu > 0$. Let $I \in C(W,\R)$ be an even functional satisfying
\begin{equation} \label{301}
I(w_t) = tI(w) \quad \forall w \in W,\, t \ge 0
\end{equation}
and
\begin{equation} \label{302}
I(w) > 0 \quad \forall w \in W \setminus \set{0},
\end{equation}
and let $\M = \set{w \in W : I(w) = 1}$. The set $\M$ is closed and symmetric since $I$ is continuous and even. We assume that $\M$ is bounded. In view of \eqref{301} and \eqref{302}, we can define a continuous projection $\pi : W \setminus \set{0} \to \M$ by
\begin{equation} \label{303}
\pi(w) = w_{t_w},
\end{equation}
where $t_w = I(w)^{-1}$.

Let $E \in C^1(W,\R)$ be an even functional. Assume that $\exists c^\ast > 0$ such that $E$ satisfies the \PS{c} condition for all $c \in (0,c^\ast)$. Let $\Gamma$ denote the group of odd homeomorphisms of $W$ that are the identity outside $E^{-1}(0,c^\ast)$, let $\A^\ast$ denote the class of symmetric subsets of $W$, and let
\[
\M_\rho = \set{w \in W : I(w) = \rho} = \set{w_\rho : w \in \M}
\]
for $\rho > 0$.

\begin{definition}[see \cite{MR84c:58014}]
The pseudo-index of $M \in \A^\ast$ related to $i$, $\M_\rho$, and $\Gamma$ is defined \linebreak by
\begin{equation} \label{304}
i^\ast(M) = \min_{\gamma \in \Gamma}\, i(\gamma(M) \cap \M_\rho).
\end{equation}
\end{definition}

We have the following theorem.

\begin{theorem} \label{Theorem 305}
Let $A_0$ and $B_0$ be symmetric subsets of $\M$ such that $A_0$ is compact, $B_0$ is closed, and
\begin{equation} \label{305}
i(A_0) \ge k + m - 1, \qquad i(\M \setminus B_0) \le k - 1
\end{equation}
for some $k, m \ge 1$. Let $R > \rho > 0$ and let
\begin{gather*}
X = \set{w_t : w \in A_0,\, 0 \le t \le R},\\
A = \set{w_R : w \in A_0},\\
B = \set{w_\rho : w \in B_0}.
\end{gather*}
Assume that
\begin{equation} \label{306}
\sup_{w \in A}\, E(w) \le 0 < \inf_{w \in B}\, E(w), \qquad \sup_{w \in X}\, E(w) < c^\ast.
\end{equation}
For $j = k,\dots,k + m - 1$, let $\A_j^\ast = \set{M \in \A^\ast : M \text{ is compact and } i^\ast(M) \ge j}$ and set
\[
c_j^\ast := \inf_{M \in \A_j^\ast}\, \sup_{w \in M}\, E(w).
\]
Then
\[
\inf_{w \in B}\, E(w) \le c_k^\ast \le \dotsb \le c_{k+m-1}^\ast \le \sup_{w \in X}\, E(w),
\]
each $c_j^\ast$ is a critical value of $E$, and $E$ has $m$ distinct pairs of associated critical points.
\end{theorem}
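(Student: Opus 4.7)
The plan is to run a standard pseudo-index minimax argument: establish the sandwich $\inf_B E \le c_k^\ast \le \dotsb \le c_{k+m-1}^\ast \le \sup_X E$ from two index estimates, and then extract critical values and multiplicity from the equivariant deformation lemma together with the properties $(i_2)$, $(i_4)$, $(i_5)$ of the cohomological index. The piercing property $(i_7)$ is the crucial nonlinear ingredient and supplies the upper bound; the scaling $\M \to \M_\rho$ together with the hypothesis \eqref{305} supplies the lower bound.

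For the upper bound I will show $X \in \A_{k+m-1}^\ast$. Compactness and symmetry of $X$ follow from those of $A_0 \times [0,R]$, so what remains is $i(\gamma(X) \cap \M_\rho) \ge k+m-1$ for every $\gamma \in \Gamma$. Fix such a $\gamma$. Because $I(0) = 0$, $\gamma(0) = 0$, the map $(w,\tau) \mapsto I(\gamma(w_\tau))$ is jointly continuous, and $A_0$ is compact, I can choose $\tau_0 \in (0,\rho)$ so small that $I(\gamma(w_{\tau_0})) < \rho$ for every $w \in A_0$. Setting $X_0 = \set{w_t : w \in A_0,\, \tau_0 \le t \le R}$ (so $0 \notin X_0$), $C = A_0$, $C_0 = \gamma(X_0) \cap \set{I \ge \rho}$, $C_1 = \gamma(X_0) \cap \set{I \le \rho}$, and $\varphi(w,s) = \gamma(w_{R - s(R - \tau_0)})$, all hypotheses of $(i_7)$ fall into place: $\varphi$ is odd and has compact image; $\sup_A E \le 0$ forces $\gamma \equiv \mathrm{id}$ on $A$ and hence $\varphi(w,0) = w_R \in C_0$; and by the choice of $\tau_0$, $\varphi(w,1) = \gamma(w_{\tau_0}) \in C_1$. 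Piercing then yields $i(\gamma(X_0) \cap \M_\rho) \ge i(A_0) \ge k+m-1$, and monotonicity $(i_2)$ transfers this bound to $\gamma(X) \cap \M_\rho$. For the lower bound, given $M \in \A_k^\ast$, taking $\gamma = \mathrm{id}$ in \eqref{304} gives $i(M \cap \M_\rho) \ge k$; since $w \mapsto w_\rho$ is an odd homeomorphism $\M \to \M_\rho$ sending $\M \setminus B_0$ onto $\M_\rho \setminus B$, $(i_2)$ and \eqref{305} give $i(\M_\rho \setminus B) \le k-1$, so $M \cap B = \emptyset$ would force $M \cap \M_\rho \subset \M_\rho \setminus B$ and contradict the lower bound on $i(M \cap \M_\rho)$; hence $M \cap B \ne \emptyset$ and $\sup_M E \ge \inf_B E$.

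Each $c_j^\ast$ therefore lies in $[\inf_B E,\sup_X E] \subset (0,c^\ast)$, where the \PS{c_j^\ast} condition is available. If $c_j^\ast$ were a regular value, the equivariant deformation lemma applied with $\eps > 0$ small enough that $[c_j^\ast - 2\eps, c_j^\ast + 2\eps] \subset (0,c^\ast)$ produces an odd homeomorphism $\eta \in \Gamma$ with $\eta(E^{c_j^\ast + \eps}) \subset E^{c_j^\ast - \eps}$; pushing an almost optimizer $M \in \A_j^\ast$ down by $\eta$ and using the $\Gamma$-invariance $i^\ast(\eta(M)) = i^\ast(M)$ contradicts the definition of $c_j^\ast$. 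When $c_j^\ast = \dotsb = c_{j+l}^\ast = c$, the usual argument---picking a closed symmetric neighborhood $U$ of the critical set $K_c$ with $i(U) = i(K_c)$ by $(i_4)$, deforming $\overline{M \setminus U}$ below level $c$, and invoking the subadditivity $i^\ast(M) \le i^\ast(\overline{M \setminus U}) + i(U)$ that follows from $(i_5)$---forces $i(K_c) \ge l+1$, producing at least $l+1$ pairs of critical points at level $c$ and $m$ pairs across distinct levels overall. The delicate step throughout is the piercing argument: naively one would like to apply $(i_7)$ to all of $X$, but $X$ contains the singular point $0$ of the scaling and a general $\gamma \in \Gamma$ can move nearby points across $\M_\rho$; the passage to $X_0$ with $\tau_0 > 0$ and the uniform smallness of $I(\gamma(w_{\tau_0}))$ on $A_0$ are precisely what make the piercing hypothesis on $\varphi(A_0 \times \set{1})$ hold for each individual $\gamma$.
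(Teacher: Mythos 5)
Your proof is correct and follows the paper's pseudo-index strategy: the lower bound comes from the index comparison forcing $M \cap B \ne \emptyset$, the upper bound from the piercing property, and the deformation argument you sketch at the end is the standard one that the paper cites from \cite[Proposition 3.42]{MR2640827}. The one genuine divergence is the detour through $\tau_0$ and $X_0$ in the piercing step, and it is unnecessary; moreover, the concern you state as motivating it --- that ``$X$ contains the singular point $0$ of the scaling and a general $\gamma \in \Gamma$ can move nearby points across $\M_\rho$'' --- is a misconception. The paper applies $(i_7)$ directly with $C = A$, $C_0 = \set{w \in W : I(w) \le \rho}$, $C_1 = \set{w \in W : I(w) \ge \rho}$, and $\varphi(w,t) = \gamma(w_t)$. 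Then $\varphi(A \times \set{0}) = \set{\gamma(0)} = \set{0} \subset C_0$ automatically, since $\gamma$ is odd so $\gamma(0) = 0$ and $I(0) = 0 < \rho$; and $\varphi(A \times \set{1}) = \gamma(A) = A \subset C_1$, since $\gamma$ is the identity on $A$ and $I \equiv R > \rho$ there. The piercing property places no restriction on where $\varphi$ takes intermediate points, and the conclusion set $\varphi(A \times [0,1]) \cap C_0 \cap C_1 = \gamma(X) \cap \M_\rho$ never contains $0$, so the index on the left of $(i_7)$ is well defined without any truncation. Your version does work --- the monotonicity step $\gamma(X_0) \cap \M_\rho \subset \gamma(X) \cap \M_\rho$ recovers the same bound --- but it buys nothing, and the joint-continuity/compactness argument for the existence of $\tau_0$ (for each fixed $\gamma$) is an extra step you do not need.
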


\begin{proof}
First we show that $c_k^\ast > 0$. Let $M \in \A_k^\ast$. Since the identity map is in $\Gamma$, \eqref{304} gives
\begin{equation} \label{307}
i(M \cap \M_\rho) \ge i^\ast(M) \ge k.
\end{equation}
On the other hand, since the restriction of $\pi$ to $\M_\rho \setminus B$ is an odd homeomorphism onto $\M \setminus B_0$, Proposition \ref{Proposition 303} $(i_2)$ gives
\begin{equation} \label{308}
i(\M_\rho \setminus B) = i(\M \setminus B_0) \le k - 1
\end{equation}
by \eqref{305}. Combining \eqref{307} and \eqref{308} gives $i(M \cap \M_\rho) > i(\M_\rho \setminus B)$, so $M$ intersects $B$ by Proposition \ref{Proposition 303} $(i_2)$ again. It follows that
\[
c_k^\ast \ge \inf_{w \in B}\, E(w) > 0
\]
by \eqref{306}.

Next we show that $c_{k+m-1}^\ast < c^\ast$. Let $\gamma \in \Gamma$. Since $A_0$ is compact, so are $X$ and $A$, in particular, $A$ is closed. Consider the continuous map $\varphi : A \times [0,1] \to W$ defined by
\[
\varphi(w,t) = \gamma(w_t).
\]
Since $\gamma$ is odd,
\[
\varphi(-w,t) = \gamma((-w)_t) = \gamma(-w_t) = - \gamma(w_t) = - \varphi(w,t) \quad \forall (w,t) \in A \times [0,1].
\]
We have $\varphi(A \times [0,1]) = \gamma(X)$. Since $X$ is compact, so is $\gamma(X)$, so $\varphi(A \times [0,1])$ is closed. Since $w_0 = 0$ for all $w \in W$ and $\gamma$ is odd,
\[
\varphi(A \times \set{0}) = \set{\gamma(0)} = \set{0}.
\]
On the other hand, $w_1 = w$ for all $w \in W$, and $\gamma$ is the identity on $A$ since $E \le 0$ on $A$ by \eqref{306} and $\gamma$ is the identity outside $E^{-1}(0,c^\ast)$, so
\[
\varphi(A \times \set{1}) = \gamma(A) = A.
\]
Noting that
\[
I(w) = R > \rho \quad \forall w \in A
\]
by \eqref{301} and applying Proposition \ref{Proposition 303} $(i_7)$ with $C = A$, $C_0 = \set{w \in W : I(w) \le \rho}$, and $C_1 = \set{w \in W : I(w) \ge \rho}$ now gives
\begin{equation} \label{309}
i(\gamma(X) \cap \M_\rho) = i(\varphi(A \times [0,1]) \cap C_0 \cap C_1) \ge i(A).
\end{equation}
Since the restriction of $\pi$ to $A$ is an odd homeomorphism onto $A_0$, Proposition \ref{Proposition 303} $(i_2)$ gives
\begin{equation} \label{310}
i(A) = i(A_0) \ge k + m - 1
\end{equation}
by \eqref{305}. Combining \eqref{309} and \eqref{310} gives $i(\gamma(X) \cap \M_\rho) \ge k + m - 1$. Since $\gamma \in \Gamma$ is arbitrary, it follows that $i^\ast(X) \ge k + m - 1$. So $X \in \A_{k+m-1}^\ast$ and hence
\[
c_{k+m-1}^\ast \le \sup_{w \in X}\, E(w) < c^\ast
\]
by \eqref{306}.

The rest now follows from standard results in critical point theory (see, e.g., Perera et al.\! \cite[Proposition 3.42]{MR2640827}).
\end{proof}

In particular, we have the following corollary when $B_0 = \M$ and $k = 1$ (see also Perera \cite{Pe23}).

\begin{corollary} \label{Corollary 306}
Let $A_0$ be a compact symmetric subset of $\M$ with $i(A_0) = m \ge 1$, let $R > \rho > 0$, and let
\[
A = \set{w_R : w \in A_0}, \qquad X = \set{w_t : w \in A_0,\, 0 \le t \le R}.
\]
Assume that
\begin{equation} \label{311}
\sup_{w \in A}\, E(w) \le 0 < \inf_{w \in \M_\rho}\, E(w), \qquad \sup_{w \in X}\, E(w) < c^\ast.
\end{equation}
Then $E$ has $m$ distinct pairs of critical points $\pm w_j,\, j = 1,\dots,m$ such that
\[
\inf_{w \in \M_\rho}\, E(w) \le E(w_j) \le \sup_{w \in X}\, E(w)
\]
for each $j$.
\end{corollary}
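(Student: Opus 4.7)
The plan is to derive Corollary \ref{Corollary 306} as a direct specialization of Theorem \ref{Theorem 305} with $B_0 = \M$ and $k = 1$. First I would verify the hypotheses of Theorem \ref{Theorem 305} under this choice. Since $\M$ is the inverse image of a point under the continuous even functional $I$, it is closed and symmetric. Taking $B_0 = \M$ gives $\M \setminus B_0 = \emptyset$, so property $(i_1)$ of Proposition \ref{Proposition 303} yields $i(\M \setminus B_0) = 0 = k - 1$, while the assumption $i(A_0) = m$ gives $i(A_0) \ge k + m - 1 = m$. Hence both inequalities in \eqref{305} hold.

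Next, with $B_0 = \M$, the set $B$ appearing in Theorem \ref{Theorem 305} becomes
\[
B = \set{w_\rho : w \in \M} = \M_\rho,
\]
so the energy conditions \eqref{306} reduce exactly to the assumptions \eqref{311}. Applying Theorem \ref{Theorem 305} then produces $m$ distinct pairs of critical points $\pm w_j$, $j = 1,\dots,m$, whose critical values $c_j^\ast$ satisfy
\[
\inf_{w \in \M_\rho}\, E(w) = \inf_{w \in B}\, E(w) \le c_1^\ast \le \cdots \le c_m^\ast \le \sup_{w \in X}\, E(w),
\]
which is exactly the asserted bound $\inf_{\M_\rho} E \le E(w_j) \le \sup_X E$ for each $j$.

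Since all the substantive analysis, namely the pseudo-index construction, the use of the piercing property $(i_7)$ in Proposition \ref{Proposition 303}, and the standard deformation-theoretic critical point machinery, is carried out inside the proof of Theorem \ref{Theorem 305}, there is no real obstacle here. The one step worth stating explicitly is the identification $B = \M_\rho$, so that the reader can see at a glance why the infimum over $\M_\rho$ in \eqref{311} matches the infimum over $B$ in \eqref{306}.
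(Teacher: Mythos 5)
Your proof is correct and is precisely the intended specialization: the paper itself states Corollary~\ref{Corollary 306} as the case $B_0 = \M$, $k = 1$ of Theorem~\ref{Theorem 305} without further argument. Your verification that $i(\M\setminus B_0) = i(\emptyset) = 0 = k-1$, that $i(A_0) = m = k+m-1$, and that $B = \M_\rho$ fills in exactly the routine checks the paper leaves implicit.
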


\section{Eigenvalue problem}

In this section we recall the construction of variational eigenvalues based on the cohomological index for the eigenvalue problem \eqref{105}. Let
\[
I(w) = \int_\Omega \left(\frac{1}{p}\, |\nabla u|^p + \frac{1}{q}\, |\nabla v|^q\right) dx, \quad w \in W
\]
and let $\M = \set{w \in W : I(w) = 1}$. Then $\M$ is a bounded complete symmetric $C^1$-Finsler manifold. Let
\[
J(w) = \frac{1}{\alpha + \beta} \int_\Omega |u|^\alpha\, |v|^\beta\, dx, \quad w \in W
\]
and let $\M^+ = \set{w \in \M : J(w) > 0}$. Then $\M^+$ is a symmetric open submanifold of $\M$ and eigenvalues of problem \eqref{105} coincide with critical values of the $C^1$-functional
\[
\Psi(w) = \frac{1}{J(w)}
\]
on $\M^+$ (see Perera et al.\! \cite[Lemma 10.7]{MR2640827}). Let $\F$ denote the class of symmetric subsets of $\M^+$ and let $i(M)$ be the cohomological index of $M \in \F$ (see Definition \ref{Definition 301}). The following theorem was proved in \cite{MR2640827}.

\begin{theorem}[see {\cite[Theorem 10.10]{MR2640827}}] \label{Theorem 401}
For $k \ge 1$, let $\F_k = \set{M \in \F : i(M) \ge k}$ and set
\[
\lambda_k := \inf_{M \in \F_k}\, \sup_{w \in M}\, \Psi(w).
\]
Then $\lambda_k \nearrow \infty$ is a sequence of eigenvalues of problem \eqref{105}.
\begin{enumroman}
\item \label{Theorem 401.i} The first eigenvalue is given by
    \[
    \lambda_1 = \min_{w \in \M^+}\, \Psi(w) > 0.
    \]
\item If $\lambda_k = \dotsb = \lambda_{k+m-1} = \lambda$ and $E_\lambda$ is the set of eigenfunctions associated with $\lambda$ that lie on $\M^+$, then
    \[
    i(E_\lambda) \ge m.
    \]
\item \label{Theorem 401.iii} If $\lambda_k < \lambda < \lambda_{k+1}$, then
    \[
    i(\Psi^{\lambda_k}) = i(\M^+ \setminus \Psi_\lambda) = i(\Psi^\lambda) = i(\M^+ \setminus \Psi_{\lambda_{k+1}}) = k,
    \]
    where $\Psi^a = \set{w \in \M^+ : \Psi(w) \le a}$ and $\Psi_a = \set{w \in \M^+ : \Psi(w) \ge a}$ for $a \in \R$.
\end{enumroman}
\end{theorem}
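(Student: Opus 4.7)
The plan is to deduce Theorem \ref{Theorem 401} from the standard abstract minimax framework for even $C^1$-functionals on $C^1$-Finsler manifolds, equipped with the cohomological index in place of the Krasnoselskii genus. First, since $I$ is continuously differentiable, positively homogeneous of degree $1$, and strictly positive on $W \setminus \set{0}$, the level set $\M = \set{I = 1}$ is a bounded complete symmetric $C^1$-Finsler manifold, $\M^+$ is an open symmetric submanifold, and $\Psi = 1/J$ is an even $C^1$-functional on $\M^+$. The Lagrange multiplier rule identifies critical points of $\Psi|_{\M^+}$ with eigenfunctions of \eqref{105} lying on $\M^+$, with critical value equal to the corresponding eigenvalue.

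The central analytic input is the Palais-Smale condition for $\Psi$ on $\M^+$. Given $(w_n) \subset \M^+$ with $\Psi(w_n) \to c < \infty$ and constrained derivative tending to zero, boundedness of $\M$ yields a weakly convergent subsequence $w_n \rightharpoonup w$ in $W$; unpacking the Lagrange multiplier, $w_n$ approximately solves \eqref{105} with parameter tending to $c$. Because \eqref{106} together with $\alpha, \beta > 1$ forces $\alpha < p^\ast$ and $\beta < q^\ast$, the compact embeddings $W^{1,\,p}_0(\Omega) \incl L^\alpha(\Omega)$ and $W^{1,\,q}_0(\Omega) \incl L^\beta(\Omega)$ let the right-hand side pass to the limit strongly, and the $(S_+)$-property of $-\Delta_p$ and $-\Delta_q$ then upgrades this to strong convergence of $w_n$ in $W$. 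This is the main technical hurdle, since both components must be handled simultaneously through the shared Lagrange multiplier on the coupled Finsler manifold.

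With PS available, an equivariant deformation lemma on $\M^+$ together with the monotonicity property $(i_2)$ shows that each $\lambda_k$ is a critical value: otherwise a small odd deformation would push $\Psi^{\lambda_k + \eps}$ into $\Psi^{\lambda_k - \eps}$, contradicting the definition of $\lambda_k$ via $\F_k$. That $\lambda_k \nearrow \infty$ follows because PS makes the sublevel set $\Psi^C$ relatively compact in $\M^+$ for any $C < \infty$, hence of finite cohomological index by $(i_4)$ and compact exhaustion, ruling out a uniform bound on the sequence $\lambda_k$. Claim \ref{Theorem 401.i} is then immediate: $\F_1$ contains every nonempty symmetric subset of $\M^+$, so $\lambda_1 = \min_{\M^+} \Psi$, the minimum being attained by PS and positive on $\M^+$ since $J > 0$ there.

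For the multiplicity claim, I would apply the deformation lemma at the common level $\lambda = \lambda_k = \dotsb = \lambda_{k+m-1}$ to a closed symmetric neighborhood $N$ of $E_\lambda$ with $i(N) = i(E_\lambda)$, furnished by $(i_4)$; an $M \in \F_{k+m-1}$ with $\sup_M \Psi$ close to $\lambda$ can be deformed off $N$ to a level below $\lambda$, and subadditivity $(i_5)$ then forces $i(E_\lambda) \ge m$. For claim \ref{Theorem 401.iii}, the upper bound $i(\Psi^{\lambda_k}) \le k$ is forced by the definition of $\lambda_{k+1}$ (else $\Psi^{\lambda_k} \in \F_{k+1}$ would give $\lambda_{k+1} \le \lambda_k$), while $i(\Psi^\lambda) \ge k$ follows by selecting $M \in \F_k$ with $\sup_M \Psi$ so close to $\lambda_k$ that $M \subset \Psi^\lambda$. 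The remaining equalities involving $\M^+ \setminus \Psi_\lambda$ and $\M^+ \setminus \Psi_{\lambda_{k+1}}$ are then produced by negative gradient flow deformations on the regular interval $(\lambda_k, \lambda_{k+1})$, which yield odd homotopy equivalences between the sets on either side of each equality and hence preserve the index via $(i_2)$.
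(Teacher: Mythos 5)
The paper does not prove Theorem~\ref{Theorem 401}; it is quoted from Perera--Agarwal--O'Regan \cite[Theorem 10.10]{MR2640827}, so there is no in-paper proof to compare against. Your overall framework --- $\Psi=1/J$ as a $C^1$ functional on the symmetric Finsler manifold $\M^+$, critical points identified with eigenfunctions via the Lagrange rule, the Palais--Smale condition obtained through compactness of the lower-order term and the $(S_+)$-property, an equivariant deformation lemma to certify that the $\lambda_k$ are critical values, the neighborhood/subadditivity argument for claim~$(ii)$, and the index computations for claim~$(iii)$ --- is precisely the standard route taken in the cited reference, and those parts are sound in outline.

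There is, however, one step that is wrong as stated: the assertion that the Palais--Smale condition makes the sublevel set $\Psi^C$ relatively compact in $\M^+$. PS only gives precompactness of sequences along which the constrained derivative of $\Psi$ tends to zero; generic points of a sublevel set form no such sequence. Indeed $\Psi^C$ is a bounded but infinite-dimensional subset of $\M$ and is never norm-precompact, so the inference ``relatively compact $\Rightarrow$ finite index'' is vacuous here. The correct route to $\lambda_k\nearrow\infty$ uses the compact embedding rather than PS. Since \eqref{106} makes both $I$ and $J$ degree-one under the scaling $w\mapsto w_t$, the odd homeomorphism $w\mapsto w_{J(w)^{-1}}$ carries $\Psi^C$ onto $\bgset{w\in W: J(w)=1,\ I(w)\le C}$, a bounded subset of $W$. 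Its image under the compact odd embedding $W\incl L^p(\Omega)\times L^q(\Omega)$ is a compact symmetric set, and the constraint $J(w)=1$ together with H\"older and the bound $I(w)\le C$ keeps both $\pnorm[p]{u}$ and $\pnorm[q]{v}$ bounded away from zero, so the image is bounded away from the origin and therefore has finite index by subadditivity $(i_5)$; monotonicity $(i_2)$ then yields $i(\Psi^C)<\infty$. A uniform bound $\lambda_k\le C$ for all $k$ would place sets of arbitrarily large index inside $\Psi^{C+1}$, contradicting this. You should replace the ``relative compactness of sublevel sets'' sentence with this embedding argument; the rest of your proposal then goes through.
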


\section{Local Palais-Smale condition}

In this section we prove a local Palais-Smale condition for the functional $E$ defined in \eqref{115} when $\gamma \ge 1$. Let
\begin{equation} \label{202}
c^\ast = \frac{1}{N}\, \min \set{S_p^{N/p},S_q^{N/q}}.
\end{equation}

\begin{lemma} \label{Lemma 201}
If $\gamma \ge 1$, then $E$ satisfies the {\em \PS{c}} condition for all $c < c^\ast$.
\end{lemma}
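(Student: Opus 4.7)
The plan is to follow the standard three-step template for critical-growth variational problems: (i) show that any \PS{c} sequence $\seq{w_n}$ is bounded in $W$; (ii) pass to a weak limit $w$ and verify that $E'(w) = 0$; (iii) use the threshold $c < c^\ast$ together with a Brezis-Lieb / concentration-compactness argument to upgrade weak convergence to strong convergence.

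For the boundedness in step (i), I will compute the scaling-derivative identity
\[
E(w) - \langle E'(w), (u/p, v/q)\rangle = \frac{\lambda(\gamma - 1)}{a + b} \int_\Omega |u|^a |v|^b\, dx + \frac{1}{N} \int_\Omega |u|^{p^\ast}\, dx + \frac{1}{N} \int_\Omega |v|^{q^\ast}\, dx,
\]
whose shape is already visible in the $\gamma$-scaling of $E(w_t)$ recorded near \eqref{104}. When $\gamma \ge 1$ every term on the right is non-negative, so evaluation at $w_n$ yields $\int_\Omega |u_n|^{p^\ast}\, dx + \int_\Omega |v_n|^{q^\ast}\, dx \le N(c + o(1) + o(\norm{w_n}))$. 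A three-function H\"older inequality with exponents $p^\ast/a$, $q^\ast/b$, $\zeta$ (admissible by \eqref{113} since $a/p^\ast + b/q^\ast + 1/\zeta = 1$) combined with Sobolev then bounds $\int_\Omega |u_n|^a |v_n|^b\, dx \le C \norm{w_n}^{1 - 1/\zeta}$, which is strictly sublinear. Plugging both estimates back into $E(w_n) = c + o(1)$ yields $\frac{1}{p} \norm[W^{1,\,p}_0(\Omega)]{u_n}^p + \frac{1}{q} \norm[W^{1,\,q}_0(\Omega)]{v_n}^q = O(1 + \norm{w_n})$, and since $p, q > 1$ this forces $\seq{w_n}$ to be bounded.

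For step (ii), extract a subsequence with $w_n \rightharpoonup w = (u,v)$; the compactness of the subcritical embeddings $W^{1,\,p}_0(\Omega) \incl L^{p_0}(\Omega)$ and $W^{1,\,q}_0(\Omega) \incl L^{q_0}(\Omega)$ handles the coupling term, and the $S_+$ property of $-\Delta_p$ and $-\Delta_q$ (together with a.e.\! convergence of the gradients on a further subsequence) lets me pass to the limit in $E'(w_n) \to 0$ and conclude $E'(w) = 0$. Applying the same scaling identity to the critical point $w$ then shows $E(w) \ge 0$.

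For step (iii), I set $\phi_n = u_n - u$ and $\psi_n = v_n - v$, apply the Brezis-Lieb lemma to both the $|\nabla\cdot|^p$-terms and the critical $L$-norms, and use the compact convergence of the coupling term to decompose the limits of $E(w_n)$ and $\langle E'(w_n), w_n\rangle$ into contributions from $w$ and from $(\phi_n, \psi_n)$; subtracting the identities for $w$ reduces the claim to $\int_\Omega |\phi_n|^{p^\ast}\, dx, \int_\Omega |\psi_n|^{q^\ast}\, dx \to 0$. The delicate step I expect to have to manage carefully is the ensuing concentration analysis: the anisotropy permits concentration to occur predominantly in one component while the other merely oscillates weakly, so I will need to check that every such mixed scenario still costs at least $\frac{1}{N}\min\set{S_p^{N/p}, S_q^{N/q}} = c^\ast$ of residual energy (via componentwise Sobolev inequalities applied to $L_1 = \lim \norm[W^{1,\,p}_0(\Omega)]{\phi_n}^p$, $L_2 = \lim \norm[W^{1,\,q}_0(\Omega)]{\psi_n}^q$, $M_1 = \lim \int_\Omega |\phi_n|^{p^\ast}\, dx$, $M_2 = \lim \int_\Omega |\psi_n|^{q^\ast}\, dx$, together with the balance $L_1 + L_2 = M_1 + M_2$ coming from $\langle E'(w_n) - E'(w), w_n - w\rangle \to 0$). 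Given $c < c^\ast$ and $E(w) \ge 0$ this rules out all concentration, and the residual energy balance then gives $\norm[W^{1,\,p}_0(\Omega)]{\phi_n}, \norm[W^{1,\,q}_0(\Omega)]{\psi_n} \to 0$, completing the \PS{c} condition.
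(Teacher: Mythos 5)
Your proposal is correct and follows essentially the same strategy as the paper: establish boundedness via a scaling-derivative identity, pass to a weak limit, use Br\'ezis--Lieb to isolate the residual critical energy, and invoke Sobolev to show that any loss of compactness must cost at least $c^\ast$. Two minor points of comparison: your boundedness step pairs $E'(w_n)$ directly with $(u_n/p, v_n/q)$ and exploits $\gamma \ge 1$ (the paper instead uses auxiliary exponents $p_1 \in (p,p^\ast)$, $q_1 \in (q,q^\ast)$ together with \eqref{103}, a version that also works when $\gamma < 1$ and is reused later), and in the concentration step the paper tests $E'(w_n)$ against $(u_n,0)$ and $(0,v_n)$ separately to obtain the componentwise balances $L_1 = M_1$, $L_2 = M_2$ (equations \eqref{211}--\eqref{212}), which streamlines the dichotomy argument compared to working only with the combined balance $L_1 + L_2 = M_1 + M_2$ --- though your version does close since, with only the combined balance and componentwise Sobolev, $L_1 < S_p^{N/p}$ and $L_2 < S_q^{N/q}$ (not both zero) would force $M_1 + M_2 < L_1 + L_2$.
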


\begin{proof}
Let $c < c^\ast$ and let $w_j = (u_j,v_j)$ be a \PS{c} sequence. Then
\begin{multline} \label{203}
E(w_j) = \int_\Omega \left(\frac{1}{p}\, |\nabla u_j|^p + \frac{1}{q}\, |\nabla v_j|^q\right) dx - \frac{\lambda}{a + b} \int_\Omega |u_j|^a\, |v_j|^b\, dx\\[7.5pt]
- \int_\Omega \left(\frac{1}{p^\ast}\, |u_j|^{p^\ast} + \frac{1}{q^\ast}\, |v_j|^{q^\ast}\right) dx = c + \o(1),
\end{multline}
and applying $E'(w_j)$ to $(u_j,0)$ and $(0,v_j)$ gives
\begin{equation} \label{204}
\int_\Omega |\nabla u_j|^p\, dx - \frac{\lambda a}{a + b} \int_\Omega |u_j|^a\, |v_j|^b\, dx - \int_\Omega |u_j|^{p^\ast} dx = \o(1) \norm[W^{1,\,p}_0(\Omega)]{u_j}
\end{equation}
and
\begin{equation} \label{205}
\int_\Omega |\nabla v_j|^q\, dx - \frac{\lambda b}{a + b} \int_\Omega |u_j|^a\, |v_j|^b\, dx - \int_\Omega |v_j|^{q^\ast} dx = \o(1) \norm[W^{1,\,q}_0(\Omega)]{v_j},
\end{equation}
respectively. Taking any $p_1 \in (p,p^\ast)$ and $q_1 \in (q,q^\ast)$, dividing \eqref{204} and \eqref{205} by $p_1$ and $q_1$, respectively, and subtracting from \eqref{203} gives
\begin{multline*}
\left(\frac{1}{p} - \frac{1}{p_1}\right) \int_\Omega |\nabla u_j|^p\, dx + \left(\frac{1}{q} - \frac{1}{q_1}\right) \int_\Omega |\nabla v_j|^q\, dx - \frac{\lambda}{a + b} \left(1 - \frac{a}{p_1} - \frac{b}{q_1}\right) \int_\Omega |u_j|^a\, |v_j|^b\, dx\\[7.5pt]
+ \left(\frac{1}{p_1} - \frac{1}{p^\ast}\right) \int_\Omega |u_j|^{p^\ast} dx + \left(\frac{1}{q_1} - \frac{1}{q^\ast}\right) \int_\Omega |v_j|^{q^\ast} dx = c + \o(1) \big(1 + \norm{w_j}\big).
\end{multline*}
Since $p_0 < p^\ast$ and $q_0 < q^\ast$, this together with \eqref{103} implies that the sequence $\seq{w_j}$ is bounded in $W$.

Passing to a subsequence, we may now assume that $\seq{w_j}$ converges to some $w = (u,v)$ weakly in $W$, strongly in $L^{p_2}(\Omega) \times L^{q_2}(\Omega)$ for all $p_2 \in [1,p^\ast)$ and $q_2 \in [1,q^\ast)$, and a.e.\! in $\Omega$. Since
\[
|u_j|^a\, |v_j|^b \le \frac{a}{p_0}\, |u_j|^{p_0} + \frac{b}{q_0}\, |v_j|^{q_0}
\]
by \eqref{102} and the Young's inequality, and $w_j \to w$ in $L^{p_0}(\Omega) \times L^{q_0}(\Omega)$,
\[
\int_\Omega |u_j|^a\, |v_j|^b\, dx \to \int_\Omega |u|^a\, |v|^b\, dx.
\]
So \eqref{203} reduces to
\begin{multline} \label{206}
\int_\Omega \left(\frac{1}{p}\, |\nabla u_j|^p + \frac{1}{q}\, |\nabla v_j|^q\right) dx - \frac{\lambda}{a + b} \int_\Omega |u|^a\, |v|^b\, dx\\[7.5pt]
- \int_\Omega \left(\frac{1}{p^\ast}\, |u_j|^{p^\ast} + \frac{1}{q^\ast}\, |v_j|^{q^\ast}\right) dx = c + \o(1),
\end{multline}
and \eqref{204} and \eqref{205} reduce to
\begin{equation} \label{207}
\int_\Omega |\nabla u_j|^p\, dx - \frac{\lambda a}{a + b} \int_\Omega |u|^a\, |v|^b\, dx - \int_\Omega |u_j|^{p^\ast} dx = \o(1)
\end{equation}
and
\begin{equation} \label{208}
\int_\Omega |\nabla v_j|^q\, dx - \frac{\lambda b}{a + b} \int_\Omega |u|^a\, |v|^b\, dx - \int_\Omega |v_j|^{q^\ast} dx = \o(1),
\end{equation}
respectively. Moreover, applying $E'(w_j)$ to $(u,0)$ and $(0,v)$, and passing to the limit gives
\begin{equation} \label{209}
\int_\Omega |\nabla u|^p\, dx - \frac{\lambda a}{a + b} \int_\Omega |u|^a\, |v|^b\, dx - \int_\Omega |u|^{p^\ast} dx = 0
\end{equation}
and
\begin{equation} \label{210}
\int_\Omega |\nabla v|^q\, dx - \frac{\lambda b}{a + b} \int_\Omega |u|^a\, |v|^b\, dx - \int_\Omega |v|^{q^\ast} dx = 0,
\end{equation}
respectively.

Set
\[
\widetilde{u}_j = u_j - u, \qquad \widetilde{v}_j = v_j - v, \qquad \widetilde{w}_j = (\widetilde{u}_j,\widetilde{v}_j).
\]
We will show that $\norm{\widetilde{w}_j} \to 0$ for a subsequence. Suppose that this is not the case. By the Br{\'e}zis-Lieb lemma \cite[Theorem 1]{MR699419},
\begin{gather*}
\int_\Omega |\nabla u_j|^p\, dx - \int_\Omega |\nabla u|^p\, dx = \int_\Omega |\nabla \widetilde{u}_j|^p\, dx + \o(1),\\[7.5pt]
\int_\Omega |\nabla v_j|^p\, dx - \int_\Omega |\nabla v|^p\, dx = \int_\Omega |\nabla \widetilde{v}_j|^p\, dx + \o(1),\\[7.5pt]
\int_\Omega |u_j|^{p^\ast} dx - \int_\Omega |u|^{p^\ast} dx = \int_\Omega |\widetilde{u}_j|^{p^\ast} dx + \o(1),\\[7.5pt]
\int_\Omega |v_j|^{p^\ast} dx - \int_\Omega |v|^{p^\ast} dx = \int_\Omega |\widetilde{v}_j|^{p^\ast} dx + \o(1).
\end{gather*}
So subtracting \eqref{209} from \eqref{207} and \eqref{210} from \eqref{208}, and combining with \eqref{201} gives
\begin{equation} \label{211}
\int_\Omega |\nabla \widetilde{u}_j|^p\, dx = \int_\Omega |\widetilde{u}_j|^{p^\ast} dx + \o(1) \le \frac{1}{S_p^{p^\ast/p}} \left(\int_\Omega |\nabla \widetilde{u}_j|^p\, dx\right)^{p^\ast/p} + \o(1)
\end{equation}
and
\begin{equation} \label{212}
\int_\Omega |\nabla \widetilde{v}_j|^q\, dx = \int_\Omega |\widetilde{v}_j|^{q^\ast} dx + \o(1) \le \frac{1}{S_q^{q^\ast/q}} \left(\int_\Omega |\nabla \widetilde{v}_j|^q\, dx\right)^{q^\ast/q} + \o(1),
\end{equation}
respectively. Since $\norm{w_j}$ is bounded away from zero, either $\norm[W^{1,\,p}_0(\Omega)]{\widetilde{u}_j}$ or $\norm[W^{1,\,q}_0(\Omega)]{\widetilde{v}_j}$ is bounded away from zero for a renamed subsequence. So it follows from \eqref{211} and \eqref{212} that either
\[
\int_\Omega |\nabla \widetilde{u}_j|^p\, dx \ge S_p^{N/p} + \o(1)
\]
or
\[
\int_\Omega |\nabla \widetilde{v}_j|^q\, dx \ge S_q^{N/q} + \o(1),
\]
and hence
\begin{equation} \label{213}
\int_\Omega (|\nabla \widetilde{u}_j|^p + |\nabla \widetilde{v}_j|^q)\, dx \ge \min \set{S_p^{N/p},S_q^{N/q}} + \o(1).
\end{equation}
Dividing \eqref{207} and \eqref{208} by $p^\ast$ and $q^\ast$, respectively, and subtracting from \eqref{206} gives
\[
\frac{1}{N} \int_\Omega (|\nabla u_j|^p + |\nabla v_j|^q)\, dx - \frac{\lambda}{a + b} \left(1 - \frac{a}{p^\ast} - \frac{b}{q^\ast}\right) \int_\Omega |u|^a\, |v|^b\, dx = c + \o(1),
\]
and subtracting from this \eqref{209} and \eqref{210} divided by $N$ gives
\begin{multline*}
\frac{1}{N} \int_\Omega (|\nabla \widetilde{u}_j|^p + |\nabla \widetilde{v}_j|^q)\, dx - \frac{\lambda}{a + b} \left(1 - \frac{a}{p} - \frac{b}{q}\right) \int_\Omega |u|^a\, |v|^b\, dx\\[7.5pt]
+ \frac{1}{N} \int_\Omega \left(|u|^{p^\ast} + |v|^{q^\ast}\right) dx = c + \o(1).
\end{multline*}
Combining this with \eqref{213}, \eqref{202}, and \eqref{104} gives
\begin{equation} \label{214}
c \ge c^\ast + \frac{\lambda\, (\gamma - 1)}{a + b} \int_\Omega |u|^a\, |v|^b\, dx.
\end{equation}
Since $\gamma \ge 1$, this gives $c \ge c^\ast$, contrary to assumption.
\end{proof}

\section{Proofs}

\subsection{Proof of Theorem \ref{Theorem 101}}

We apply Theorem \ref{Theorem 305} with
\[
W = W^{1,\,p}_0(\Omega) \times W^{1,\,q}_0(\Omega) = \set{w = (u,v) : u \in W^{1,\,p}_0(\Omega),\, v \in W^{1,\,q}_0(\Omega)},
\]
$\mu = 1/p$ and $\nu = 1/q$,
\[
I(w) = \int_\Omega \left(\frac{1}{p}\, |\nabla u|^p + \frac{1}{q}\, |\nabla v|^q\right) dx, \quad w \in W,
\]
and
\[
E(w) = I(w) - \lambda J(w) - \int_\Omega \left(\frac{1}{p^\ast}\, |u|^{p^\ast} + \frac{1}{q^\ast}\, |v|^{q^\ast}\right) dx, \quad w \in W,
\]
where
\[
J(w) = \frac{1}{a + b} \int_\Omega |u|^a\, |v|^b\, dx.
\]
In view of Lemma \ref{Lemma 201}, we take
\[
c^\ast = \frac{1}{N}\, \min \set{S_p^{N/p},S_q^{N/q}}.
\]

Let $\lambda \in (\lambda_k - \delta,\lambda_k)$, let
\begin{equation} \label{514}
0 < \eps < \min \set{\delta/(\lambda_k - \lambda) - 1,(\lambda_{k+m} - \lambda_{k+m-1})/\delta},
\end{equation}
and let $\eps_\lambda = \eps\, (\lambda_k - \lambda)$. Then $\lambda_{k+m-1} < \lambda_{k+m-1} + \eps_\lambda < \lambda_{k+m}$ and hence
\[
i(\M^+ \setminus \Psi_{\lambda_{k+m-1} + \eps_\lambda}) = k + m - 1
\]
by Theorem \ref{Theorem 401} \ref{Theorem 401.iii}. Since $\M^+ \setminus \Psi_{\lambda_{k+m-1} + \eps_\lambda}$ is an open symmetric subset of $\M$, then it has a compact symmetric subset $A_0$ of index $k + m - 1$ (see the proof of Proposition 3.1 in Degiovanni and Lancelotti \cite{MR2371112}). We take $B_0 = \Psi_{\lambda_k} \cup (\M \setminus \M^+)$. Then
\[
\M \setminus B_0 = \M^+ \setminus \Psi_{\lambda_k}.
\]
Either $\lambda_1 = \cdots = \lambda_k$, or $\lambda_{l-1} < \lambda_l = \cdots = \lambda_k$ for some $2 \le l \le k$. In the former case,
\[
i(\M \setminus B_0) = i(\M^+ \setminus \Psi_{\lambda_1}) = i(\emptyset) = 0 \le k - 1
\]
by Theorem \ref{Theorem 401} \ref{Theorem 401.i} and Proposition \ref{Proposition 303} $(i_1)$. In the latter case,
\[
i(\M \setminus B_0) = i(\M^+ \setminus \Psi_{\lambda_l}) = l - 1 \le k - 1
\]
by Theorem \ref{Theorem 401} \ref{Theorem 401.iii}.

Let $R > \rho > 0$ and let
\begin{gather*}
X = \set{w_t : w \in A_0,\, 0 \le t \le R},\\
A = \set{w_R : w \in A_0},\\
B = \set{w_\rho : w \in B_0}.
\end{gather*}
For $t \ge 0$,
\begin{equation} \label{501}
E(w_t) = \begin{cases}
\ds{t \left(1 - \frac{\lambda}{\Psi(w)}\right) - \frac{t^{p^\ast/p}}{p^\ast} \int_\Omega |u|^{p^\ast} dx - \frac{t^{q^\ast/q}}{q^\ast} \int_\Omega |v|^{q^\ast} dx}, & w \in \M^+\\[25pt]
\ds{t - \frac{t^{p^\ast/p}}{p^\ast} \int_\Omega |u|^{p^\ast} dx - \frac{t^{q^\ast/q}}{q^\ast} \int_\Omega |v|^{q^\ast} dx}, & w \in \M \setminus \M^+.
\end{cases}
\end{equation}
Since $J(w) = 0$ if $\int_\Omega |u|^{p^\ast} dx$ or $\int_\Omega |v|^{q^\ast} dx$ is zero, these integrals are positive on $\M^+$ and hence bounded away from zero on the compact subset $A_0$, so \eqref{501} gives
\[
E(w_t) \le t - c_1\, t^{p^\ast/p} - c_2\, t^{q^\ast/q} \quad \forall w \in A_0
\]
for some constants $c_1, c_2 > 0$. So the first inequality in \eqref{306} holds if $R$ is sufficiently large. Since $\M$ is bounded, \eqref{501} also gives
\[
E(w_t) \ge \begin{cases}
t \left(1 - \dfrac{\lambda}{\lambda_k}\right) - c_3\, t^{p^\ast/p} - c_4\, t^{q^\ast/q} & \forall w \in \Psi_{\lambda_k}\\[20pt]
t - c_3\, t^{p^\ast/p} - c_4\, t^{q^\ast/q} & \forall w \in \M \setminus \M^+
\end{cases}
\]
for some constants $c_3, c_4 > 0$, so the second inequality in \eqref{306} holds if $\lambda < \lambda_k$ and $\rho$ is sufficiently small.

Any $w \in X$ can be written as $w = \widetilde{w}_t$ for some $\widetilde{w} \in A_0$ and $t \in [0,R]$. Then
\[
I(w) = t\, I(\widetilde{w}) = t, \qquad J(w) = t\, J(\widetilde{w}) = \frac{I(w)}{\Psi(\widetilde{w})}.
\]
Since $A_0 \subset \M^+ \setminus \Psi_{\lambda_{k+m-1} + \eps_\lambda}$ and hence $\Psi(\widetilde{w}) < \lambda_{k+m-1} + \eps_\lambda = \lambda_k + \eps_\lambda$, this gives
\[
I(w) \le (\lambda_k + \eps_\lambda)\, J(w),
\]
so
\[
E(w) \le (\lambda_k + \eps_\lambda - \lambda)\, J(w) - \int_\Omega \left(\frac{1}{p^\ast}\, |u|^{p^\ast} + \frac{1}{q^\ast}\, |v|^{q^\ast}\right) dx.
\]
Since
\[
J(w) \le \frac{\vol{\Omega}^{1/\zeta}}{a + b} \left(\int_\Omega |u|^{p^\ast} dx\right)^{a/p^\ast}\! \left(\int_\Omega |v|^{q^\ast} dx\right)^{b/q^\ast}
\]
by the H\"{o}lder inequality, this in turn gives
\[
E(w) \le \frac{(1 + \eps)(\lambda_k - \lambda) \vol{\Omega}^{1/\zeta}}{a + b}\, \sigma^{a/p^\ast} \tau^{b/q^\ast} - \frac{\sigma}{p^\ast} - \frac{\tau}{q^\ast},
\]
where
\[
\sigma = \int_\Omega |u|^{p^\ast} dx, \qquad \tau = \int_\Omega |v|^{q^\ast} dx.
\]
Maximizing the right-hand side over all $\sigma, \tau \ge 0$ gives
\[
\sup_{w \in X}\, E(w) \le \frac{\vol{\Omega}}{\zeta} \left[\frac{a^{a/p^\ast} b^{b/q^\ast}}{a + b}\, (1 + \eps)(\lambda_k - \lambda)\right]^\zeta < \frac{\vol{\Omega}}{\zeta} \left(\frac{a^{a/p^\ast} b^{b/q^\ast}}{a + b}\, \delta\right)^\zeta = \frac{1}{N}\, \min \set{S_p^{N/p},S_q^{N/q}}
\]
by \eqref{514} and \eqref{515}. It now follows from Theorem \ref{Theorem 305} that system \eqref{101} has $m$ distinct pairs of solutions $\pm w^\lambda_j,\, j = 1,\dots,m$ such that
\begin{equation} \label{516}
0 < \inf_{w \in B}\, E(w) \le E(w^\lambda_j) \le \sup_{w \in X}\, E(w) \le \frac{\vol{\Omega}}{\zeta} \left[\frac{a^{a/p^\ast} b^{b/q^\ast}}{a + b}\, (1 + \eps)(\lambda_k - \lambda)\right]^\zeta
\end{equation}
for each $j$.

It remains to show that $w^\lambda_j \to 0$ as $\lambda \nearrow \lambda_k$. Let $w^\lambda_j = (u^\lambda_j,v^\lambda_j)$. By \eqref{516},
\begin{multline} \label{517}
E(w^\lambda_j) = \int_\Omega \left(\frac{1}{p}\, |\nabla u^\lambda_j|^p + \frac{1}{q}\, |\nabla v^\lambda_j|^q\right) dx - \frac{\lambda}{a + b} \int_\Omega |u^\lambda_j|^a\, |v^\lambda_j|^b\, dx\\[7.5pt]
- \int_\Omega \left(\frac{1}{p^\ast}\, |u^\lambda_j|^{p^\ast} + \frac{1}{q^\ast}\, |v^\lambda_j|^{q^\ast}\right) dx = \o(1)
\end{multline}
as $\lambda \nearrow \lambda_k$. Since $w^\lambda_j$ is a critical point of $E$,
\begin{equation} \label{518}
E'(w^\lambda_j)\, (u^\lambda_j,0) = \int_\Omega |\nabla u^\lambda_j|^p\, dx - \frac{\lambda a}{a + b} \int_\Omega |u^\lambda_j|^a\, |v^\lambda_j|^b\, dx - \int_\Omega |u^\lambda_j|^{p^\ast} dx = 0
\end{equation}
and
\begin{equation} \label{519}
E'(w^\lambda_j)\, (0,v^\lambda_j) = \int_\Omega |\nabla v^\lambda_j|^q\, dx - \frac{\lambda b}{a + b} \int_\Omega |u^\lambda_j|^a\, |v^\lambda_j|^b\, dx - \int_\Omega |v^\lambda_j|^{q^\ast} dx = 0.
\end{equation}
Dividing \eqref{518} and \eqref{519} by $p$ and $q$, respectively, and subtracting from \eqref{517} gives
\[
\frac{1}{N} \int_\Omega \left(|u^\lambda_j|^{p^\ast} + |v^\lambda_j|^{q^\ast}\right) dx = \o(1)
\]
since $\gamma = 1$, so
\[
\int_\Omega |u^\lambda_j|^{p^\ast} dx = \o(1), \qquad \int_\Omega |v^\lambda_j|^{q^\ast} dx = \o(1).
\]
Since
\[
\int_\Omega |u^\lambda_j|^a\, |v^\lambda_j|^b\, dx \le \left(\int_\Omega |u^\lambda_j|^p\, dx\right)^{a/p}\! \left(\int_\Omega |v^\lambda_j|^q\, dx\right)^{b/q}
\]
by the H\"{o}lder inequality, then
\[
\int_\Omega |u^\lambda_j|^a\, |v^\lambda_j|^b\, dx = \o(1).
\]
The desired conclusion now follows from \eqref{517}.

\subsection{Proof of Theorem \ref{Theorem 104}}

We apply Corollary \ref{Corollary 306} with
\[
W = W^{1,\,p}_0(\Omega) \times W^{1,\,q}_0(\Omega) = \set{w = (u,v) : u \in W^{1,\,p}_0(\Omega),\, v \in W^{1,\,q}_0(\Omega)},
\]
$\mu = 1/p$ and $\nu = 1/q$,
\[
I(w) = \int_\Omega \left(\frac{1}{p}\, |\nabla u|^p + \frac{1}{q}\, |\nabla v|^q\right) dx, \quad w \in W,
\]
and
\[
E(w) = I(w) - \frac{\lambda}{a + b} \int_\Omega |u|^a\, |v|^b\, dx - \int_\Omega \left(\frac{1}{p^\ast}\, |u|^{p^\ast} + \frac{1}{q^\ast}\, |v|^{q^\ast}\right) dx, \quad w \in W.
\]
In view of Lemma \ref{Lemma 201}, we take
\[
c^\ast = \frac{1}{N}\, \min \set{S_p^{N/p},S_q^{N/q}}.
\]

Let $\alpha, \beta > 1$ satisfy \eqref{106} and let $\seq{\lambda_k}$ be the sequence of eigenvalues of the eigenvalue problem \eqref{105} based on the cohomological index (see Theorem \ref{Theorem 401}). Since $\lambda_k \nearrow \infty$, by taking $m$ larger if necessary, we may assume that $\lambda_m < \lambda_{m+1}$. Then
\[
i(\M^+ \setminus \Psi_{\lambda_{m+1}}) = m
\]
by Theorem \ref{Theorem 401} \ref{Theorem 401.iii}. Since $\M^+ \setminus \Psi_{\lambda_{m+1}}$ is an open symmetric subset of $\M$, then it has a compact symmetric subset $A_0$ of index $m$ (see the proof of Proposition 3.1 in Degiovanni and Lancelotti \cite{MR2371112}).

Let $R > \rho > 0$ and let
\[
A = \set{w_R : w \in A_0}, \qquad X = \set{w_t : w \in A_0,\, 0 \le t \le R}.
\]
For $t \ge 0$,
\begin{equation} \label{502}
E(w_t) = t - \frac{\lambda t^\gamma}{a + b} \int_\Omega |u|^a\, |v|^b\, dx - \frac{t^{p^\ast/p}}{p^\ast} \int_\Omega |u|^{p^\ast} dx - \frac{t^{q^\ast/q}}{q^\ast} \int_\Omega |v|^{q^\ast} dx, \quad w \in \M.
\end{equation}
Since $J(w) = 0$ if $\int_\Omega |u|^a\, |v|^b\, dx$ is zero, this integral is positive on $\M^+$ and hence bounded away from zero on the compact subset $A_0$, so \eqref{502} gives
\begin{equation} \label{503}
E(w_t) \le t - c_1\, \lambda t^\gamma \quad \forall w \in A_0
\end{equation}
for some constant $c_1 > 0$. Since $\lambda > 0$ and $\gamma > 1$, it follows that the first inequality in \eqref{311} holds if $R$ is sufficiently large. Since $\M$ is bounded, \eqref{502} also gives
\[
E(w_t) \ge t - c_2\, \lambda t^\gamma - c_3\, t^{p^\ast/p} - c_4\, t^{q^\ast/q} \quad \forall w \in \M
\]
for some constants $c_2, c_3, c_4 > 0$, so the second inequality in \eqref{311} holds if $\rho$ is sufficiently small. Maximizing the right-hand side of \eqref{503} over all $t \ge 0$ gives
\[
\sup_{w \in X}\, E(w) \le \frac{\gamma - 1}{\gamma\, (c_1\, \lambda \gamma)^{1/(\gamma - 1)}},
\]
so the last inequality in \eqref{311} holds if $\lambda$ is sufficiently large. It now follows from Corollary \ref{Corollary 306} that system \eqref{101} has $m$ distinct pairs of solutions $\pm w^\lambda_j,\, j = 1,\dots,m$ such that
\begin{equation} \label{520}
0 < \inf_{w \in \M_\rho}\, E(w) \le E(w^\lambda_j) \le \sup_{w \in X}\, E(w) \le \frac{\gamma - 1}{\gamma\, (c_1\, \lambda \gamma)^{1/(\gamma - 1)}}
\end{equation}
for each $j$.

It remains to show that $w^\lambda_j \to 0$ as $\lambda \to \infty$. Let $w^\lambda_j = (u^\lambda_j,v^\lambda_j)$. By \eqref{520},
\begin{multline} \label{521}
E(w^\lambda_j) = \int_\Omega \left(\frac{1}{p}\, |\nabla u^\lambda_j|^p + \frac{1}{q}\, |\nabla v^\lambda_j|^q\right) dx - \frac{\lambda}{a + b} \int_\Omega |u^\lambda_j|^a\, |v^\lambda_j|^b\, dx\\[7.5pt]
- \int_\Omega \left(\frac{1}{p^\ast}\, |u^\lambda_j|^{p^\ast} + \frac{1}{q^\ast}\, |v^\lambda_j|^{q^\ast}\right) dx = \o(1)
\end{multline}
as $\lambda \to \infty$. Since $w^\lambda_j$ is a critical point of $E$,
\begin{equation} \label{522}
E'(w^\lambda_j)\, (u^\lambda_j,0) = \int_\Omega |\nabla u^\lambda_j|^p\, dx - \frac{\lambda a}{a + b} \int_\Omega |u^\lambda_j|^a\, |v^\lambda_j|^b\, dx - \int_\Omega |u^\lambda_j|^{p^\ast} dx = 0
\end{equation}
and
\begin{equation} \label{523}
E'(w^\lambda_j)\, (0,v^\lambda_j) = \int_\Omega |\nabla v^\lambda_j|^q\, dx - \frac{\lambda b}{a + b} \int_\Omega |u^\lambda_j|^a\, |v^\lambda_j|^b\, dx - \int_\Omega |v^\lambda_j|^{q^\ast} dx = 0.
\end{equation}
Dividing \eqref{522} and \eqref{523} by $p$ and $q$, respectively, and subtracting from \eqref{521} gives
\[
\frac{\lambda\, (\gamma - 1)}{a + b} \int_\Omega |u^\lambda_j|^a\, |v^\lambda_j|^b\, dx + \frac{1}{N} \int_\Omega \left(|u^\lambda_j|^{p^\ast} + |v^\lambda_j|^{q^\ast}\right) dx = \o(1)
\]
in view of \eqref{104}. Since $\lambda > 0$ and $\gamma > 1$, this implies that
\[
\int_\Omega |u^\lambda_j|^a\, |v^\lambda_j|^b\, dx = \o(1), \qquad \int_\Omega |u^\lambda_j|^{p^\ast} dx = \o(1), \qquad \int_\Omega |v^\lambda_j|^{q^\ast} dx = \o(1).
\]
The desired conclusion now follows from \eqref{521}.

\subsection{Proof of Theorem \ref{Theorem 106}}

We have
\[
E(w) = I(w) - \frac{\lambda}{a + b} \int_\Omega |u|^a\, |v|^b\, dx - \int_\Omega \left(\frac{1}{p^\ast}\, |u|^{p^\ast} + \frac{1}{q^\ast}\, |v|^{q^\ast}\right) dx, \quad w \in W,
\]
where
\[
I(w) = \int_\Omega \left(\frac{1}{p}\, |\nabla u|^p + \frac{1}{q}\, |\nabla v|^q\right) dx.
\]
We will use a truncation of $E$ inspired by Garc{\'{\i}}a Azorero and Peral Alonso \cite{MR1083144}.

By the H\"{o}lder inequality,
\[
\frac{1}{a + b} \int_\Omega |u|^a\, |v|^b\, dx \le \frac{\vol{\Omega}^{1 - \gamma}}{a + b} \left(\int_\Omega |u|^p\, dx\right)^{a/p}\! \left(\int_\Omega |v|^q\, dx\right)^{b/q} \le c_1\, I(w)^\gamma
\]
for some constant $c_1 > 0$. By \eqref{201},
\begin{multline*}
\int_\Omega \left(\frac{1}{p^\ast}\, |u|^{p^\ast} + \frac{1}{q^\ast}\, |v|^{q^\ast}\right) dx \le \frac{S_p^{- p^\ast/p}}{p^\ast} \left(\int_\Omega |\nabla u|^p\, dx\right)^{p^\ast/p} + \frac{S_q^{- q^\ast/q}}{q^\ast} \left(\int_\Omega |\nabla v|^q\, dx\right)^{q^\ast/q}\\[7.5pt]
\le c_2\, I(w)^{p^\ast/p} + c_3\, I(w)^{q^\ast/q}
\end{multline*}
for some constants $c_2, c_3 > 0$. So
\[
E(w) \ge I(w) - c_1\, \lambda I(w)^\gamma - c_2\, I(w)^{p^\ast/p} - c_3\, I(w)^{q^\ast/q} = g_\lambda(I(w)),
\]
where
\[
g_\lambda(t) = t - c_1\, \lambda t^\gamma - c_2\, t^{p^\ast/p} - c_3\, t^{q^\ast/q}, \quad t \ge 0.
\]
Since $\gamma < 1$, $\exists \lambda_\ast > 0$ such that for all $\lambda \in (0,\lambda_\ast)$, there are $0 < R_1(\lambda) < R_2(\lambda)$ such that
\[
g_\lambda(t) < 0 \quad \forall t \in (0,R_1(\lambda)) \cup (R_2(\lambda),\infty), \qquad g_\lambda(t) > 0 \quad \forall t \in (R_1(\lambda),R_2(\lambda)).
\]
Since $g_\lambda \ge g_{\lambda_\ast}$,
\begin{equation} \label{504}
R_1(\lambda) \le R_1(\lambda_\ast), \qquad R_2(\lambda) \ge R_2(\lambda_\ast).
\end{equation}

Take a smooth function $\xi_\lambda : [0,\infty) \to [0,1]$ such that $\xi_\lambda(t) = 1$ for $t \in [0,R_1(\lambda)]$ and $\xi_\lambda(t) = 0$ for $t \in [R_2(\lambda),\infty)$, and set
\[
\widetilde{E}(w) = \xi_\lambda(I(w))\, E(w).
\]
If $\widetilde{E}(w) < 0$, then $I(w) \in (0,R_1(\lambda))$ and hence $I(z) \in (0,R_1(\lambda))$ for all $z$ in some neighborhood of $w$ by continuity. Then $\widetilde{E}$ coincides with $E$ in a neighborhood of $w$, so
\[
\widetilde{E}(w) = E(w), \qquad \widetilde{E}'(w) = E'(w).
\]
In particular, critical points of $\widetilde{E}$ at negative levels are also critical points of $E$ at the same levels.

First we show that $\widetilde{E}$ satisfies the \PS{c} condition for all $c < 0$ if $\lambda \in (0,\lambda_\ast)$ is sufficiently small. Let $c < 0$ and let $\seq{w_j} \subset W$ be a \PS{c} sequence. Since $\widetilde{E}(w_j) \to c$, for all sufficiently large $j$, $\widetilde{E}(w_j) < 0$ and hence $\widetilde{E}(w_j) = E(w_j)$ and $\widetilde{E}'(w_j) = E'(w_j)$. So $\seq{w_j}$ has a renamed subsequence for which $E(w_j) \to c$ and $E'(w_j) \to 0$. Moreover, $I(w_j) \in (0,R_1(\lambda))$, which together with the first inequality in \eqref{504} implies that $\seq{w_j}$ is bounded independently of $\lambda \in (0,\lambda_\ast)$. We may now proceed as in the proof of Lemma \ref{Lemma 201} to arrive at the inequality
\[
c \ge \frac{1}{N}\, \min \set{S_p^{N/p},S_q^{N/q}} - \frac{\lambda\, (1 - \gamma)}{a + b} \int_\Omega |u|^a\, |v|^b\, dx
\]
if $\seq{w_j}$ does not converge to its weak limit $w = (u,v)$ (see \eqref{214}). Since $c < 0$, this gives
\[
\frac{\lambda\, (1 - \gamma)}{a + b} \int_\Omega |u|^a\, |v|^b\, dx > \frac{1}{N}\, \min \set{S_p^{N/p},S_q^{N/q}}.
\]
However, since $\seq{w_j}$ is bounded independently of $\lambda$, so is $w$, and hence this inequality cannot hold for sufficiently small $\lambda$.

By taking $\lambda_\ast$ smaller if necessary, we may now assume that $\widetilde{E}$ satisfies the \PS{c} condition for all $c < 0$ for $\lambda \in (0,\lambda_\ast)$. Let $\F$ denote the class of symmetric subsets of $W \setminus \set{0}$. For $k \ge 1$, let $\F_k = \bgset{M \in \F : i(M) \ge k}$ and set
\[
c_k := \inf_{M \in \F_k}\, \sup_{w \in M}\, \widetilde{E}(w).
\]
Since $\widetilde{E}$ is clearly bounded from below, $c_k > - \infty$. We will show that $c_k < 0$. It will then follow from a standard argument in critical point theory that $c_k \nearrow 0$ is a sequence of critical values of $\widetilde{E}$ and hence also of $E$ (see, e.g., Perera et al.\! \cite[Proposition 3.36]{MR2640827}).

Since $\F_k \supset \F_{k+1}$ and hence $c_k \le c_{k+1}$, it suffices to show that $c_k < 0$ for arbitrarily large $k$. Let $\alpha, \beta > 1$ satisfy \eqref{106} and let $\seq{\lambda_k}$ be the sequence of eigenvalues of the eigenvalue problem \eqref{105} based on the cohomological index (see Theorem \ref{Theorem 401}). Since $\lambda_k \nearrow \infty$, there are arbitrarily large $k$ for which $\lambda_k < \lambda_{k+1}$. For such $k$,
\[
i(\M^+ \setminus \Psi_{\lambda_{k+1}}) = k
\]
by Theorem \ref{Theorem 401} \ref{Theorem 401.iii}. Since $\M^+ \setminus \Psi_{\lambda_{k+1}}$ is an open symmetric subset of $\M$, then it has a compact symmetric subset $M$ of index $k$ (see the proof of Proposition 3.1 in Degiovanni and Lancelotti \cite{MR2371112}).

For $t > 0$, let $M_t = \bgset{w_t : w \in M}$. Since the mapping $M \to M_t,\, w \mapsto w_t$ is an odd homeomorphism,
\[
i(M_t) = i(M) = k
\]
by Proposition \ref{Proposition 303} $(i_2)$, so $M_t \in \F_k$. Since $I(w_t) = t$ for $w \in M$, if $t \le R_1(\lambda)$, then $\widetilde{E} = E$ on $M_t$ and hence
\begin{equation} \label{505}
c_k \le \sup_{w \in M_t}\, \widetilde{E}(w) = \sup_{w \in M_t}\, E(w) = \sup_{w \in M}\, E(w_t).
\end{equation}
For $w \in M$,
\begin{equation} \label{506}
E(w_t) = t - \frac{\lambda t^\gamma}{a + b} \int_\Omega |u|^a\, |v|^b\, dx - \frac{t^{p^\ast/p}}{p^\ast} \int_\Omega |u|^{p^\ast} dx - \frac{t^{q^\ast/q}}{q^\ast} \int_\Omega |v|^{q^\ast} dx.
\end{equation}
Since $J(w) = 0$ if $\int_\Omega |u|^a\, |v|^b\, dx$ is zero, this integral is positive on $\M^+$ and hence bounded away from zero on the compact subset $M$, so \eqref{506} gives
\[
E(w_t) \le t - c_4\, \lambda t^\gamma
\]
for some constant $c_4 > 0$. Since $\lambda > 0$ and $\gamma < 1$, the right-hand side is negative if $t$ is sufficiently small. So $c_k < 0$ by \eqref{505}.

Let $w^\lambda_k = (u^\lambda_k,v^\lambda_k)$ be a critical point of $\widetilde{E}$ at the level $c_k$. It remains to show that $w^\lambda_k \to 0$ as $\lambda \searrow 0$. Since
\[
\widetilde{E}(w^\lambda_k) = c_k < 0,
\]
we have
\[
I(w^\lambda_k) = \int_\Omega \left(\frac{1}{p}\, |\nabla u^\lambda_k|^p + \frac{1}{q}\, |\nabla v^\lambda_k|^q\right) dx < R_1(\lambda),
\]
so it suffices to show that $R_1(\lambda) \to 0$ as $\lambda \searrow 0$. Since
\[
g_\lambda(\lambda) = \lambda - c_1\, \lambda^{1 + \gamma} - c_2\, \lambda^{p^\ast/p} - c_3\, \lambda^{q^\ast/q} \ge 0
\]
if $\lambda$ is sufficiently small, and $g_\lambda(t) < 0$ for all $t \in (0,R_1(\lambda))$, we have $0 < R_1(\lambda) \le \lambda$ for all sufficiently small $\lambda$, from which the desired conclusion follows.

\subsection{Proof of Theorem \ref{Theorem 110}}

We apply Corollary \ref{Corollary 306} with
\[
W = W^{1,\,p}_0(\Omega) \times W^{1,\,q}_0(\Omega) = \set{w = (u,v) : u \in W^{1,\,p}_0(\Omega),\, v \in W^{1,\,q}_0(\Omega)},
\]
$\mu = 1/p$ and $\nu = 1/q$,
\[
I(w) = \int_\Omega \left(\frac{1}{p}\, |\nabla u|^p + \frac{1}{q}\, |\nabla v|^q\right) dx, \quad w \in W,
\]
and
\[
E(w) = I(w) - \lambda J(w) + \mu K(w) - \int_\Omega \left(\frac{1}{p^\ast}\, |u|^{p^\ast} + \frac{1}{q^\ast}\, |v|^{q^\ast}\right) dx, \quad w \in W,
\]
where
\[
J(w) = \frac{1}{a + b} \int_\Omega |u|^a\, |v|^b\, dx, \qquad K(w) = \frac{1}{c + d} \int_\Omega |u|^c\, |v|^d\, dx.
\]

First we show that $E$ satisfies the \PS{c} condition for all $c < c^\ast$, where $c^\ast$ is as in \eqref{202}. Let $c < c^\ast$ and let $\seq{w_j} \subset W$ be a \PS{c} sequence. As in the proof of Lemma \ref{Lemma 201}, $\seq{w_j}$ is bounded and
\[
c \ge c^\ast + \frac{\mu\, (1 - \kappa)}{c + d} \int_\Omega |u|^c\, |v|^d\, dx
\]
if $\seq{w_j}$ does not converge to its weak limit $w = (u,v)$ (see \eqref{214}). This together with \eqref{111} gives $c \ge c^\ast$, contrary to assumption.

By taking $k$ larger if necessary, we may assume that $\lambda_k < \lambda_{k+1}$. Fix $\underline{\lambda} \le \lambda$ such that $\lambda_k < \underline{\lambda} < \lambda_{k+1}$. Then
\[
i(\M^+ \setminus \Psi_{\underline{\lambda}}) = k
\]
by Theorem \ref{Theorem 401} \ref{Theorem 401.iii}. Since $\M^+ \setminus \Psi_{\underline{\lambda}}$ is an open symmetric subset of $\M$, then it has a compact symmetric subset $A_0$ of index $k$ (see the proof of Proposition 3.1 in Degiovanni and Lancelotti \cite{MR2371112}).

Let $R > \rho > 0$ and let
\[
A = \set{w_R : w \in A_0}, \qquad X = \set{w_t : w \in A_0,\, 0 \le t \le R}.
\]
For $t \ge 0$,
\begin{equation} \label{507}
E(w_t) = \begin{cases}
\ds{t \left(1 - \frac{\lambda}{\Psi(w)}\right) + \mu t^\kappa\, K(w) - \frac{t^{p^\ast/p}}{p^\ast} \int_\Omega |u|^{p^\ast} dx - \frac{t^{q^\ast/q}}{q^\ast} \int_\Omega |v|^{q^\ast} dx}, & w \in \M^+\\[25pt]
\ds{t + \mu t^\kappa\, K(w) - \frac{t^{p^\ast/p}}{p^\ast} \int_\Omega |u|^{p^\ast} dx - \frac{t^{q^\ast/q}}{q^\ast} \int_\Omega |v|^{q^\ast} dx}, & w \in \M \setminus \M^+.
\end{cases}
\end{equation}
Since $J(w) = 0$ if $\int_\Omega |u|^{p^\ast} dx$ or $\int_\Omega |v|^{q^\ast} dx$ is zero, these integrals are positive on $\M^+$ and hence bounded away from zero on the compact subset $A_0$, so \eqref{507} gives
\[
E(w_t) \le t + c_1\, t^\kappa - c_2\, t^{p^\ast/p} - c_3\, t^{q^\ast/q} \quad \forall w \in A_0
\]
for some constants $c_1, c_2, c_3 > 0$. So the first inequality in \eqref{311} holds if $R$ is sufficiently large. Fix $\overline{\lambda} > \lambda$. Since $\M$ is bounded, \eqref{507} together with Theorem \ref{Theorem 401} \ref{Theorem 401.i} gives
\begin{equation} \label{508}
E(w_t) \ge \begin{cases}
t \left(1 - \dfrac{\lambda}{\overline{\lambda}}\right) - c_4\, t^{p^\ast/p} - c_5\, t^{q^\ast/q} & \forall w \in \Psi_{\overline{\lambda}}\\[20pt]
\mu t^\kappa\, K(w) - t \left(\dfrac{\lambda}{\lambda_1} - 1\right) - c_4\, t^{p^\ast/p} - c_5\, t^{q^\ast/q} & \forall w \in \M^+ \setminus \Psi_{\overline{\lambda}}\\[25pt]
t - c_4\, t^{p^\ast/p} - c_5\, t^{q^\ast/q} & \forall w \in \M \setminus \M^+
\end{cases}
\end{equation}
for some constants $c_4, c_5 > 0$. We will show that
\begin{equation} \label{509}
\inf_{w \in \M^+ \setminus \Psi_{\overline{\lambda}}}\, K(w) > 0.
\end{equation}
Since $\mu > 0$ and $\kappa < 1$, it will then follow from \eqref{508} that the second inequality in \eqref{311} holds if $\rho$ is sufficiently small.

Let $\theta \in (0,\min \set{1,a/c,b/d,\eta/\zeta})$, where $\zeta$ and $\eta$ are as in \eqref{113} and \eqref{114}, respectively. Writing
\[
|u|^a\, |v|^b = \left(|u|^c\, |v|^d\right)^\theta |u|^{a - \theta c}\, |v|^{b - \theta d}
\]
and noting that
\[
1 - \theta - \frac{a - \theta c}{p^\ast} - \frac{b - \theta d}{q^\ast} = \frac{1}{\zeta} - \frac{\theta}{\eta},
\]
the H\"{o}lder inequality gives
\[
\int_\Omega |u|^a\, |v|^b\, dx \le \vol{\Omega}^{1/\zeta - \theta/\eta} \left(\int_\Omega |u|^c\, |v|^d\, dx\right)^\theta \left(\int_\Omega |u|^{p^\ast} dx\right)^{(a - \theta c)/p^\ast} \left(\int_\Omega |v|^{q^\ast} dx\right)^{(b - \theta d)/q^\ast}.
\]
Since
\[
\int_\Omega |u|^a\, |v|^b\, dx = (a + b)\, J(w) = \frac{a + b}{\Psi(w)} > \frac{a + b}{\overline{\lambda}} \quad \forall w \in \M^+ \setminus \Psi_{\overline{\lambda}}
\]
and $\int_\Omega |u|^{p^\ast} dx$ and $\int_\Omega |v|^{q^\ast} dx$ are bounded on $\M^+ \setminus \Psi_{\overline{\lambda}}$, \eqref{509} follows.

Any $w \in X$ can be written as $w = \widetilde{w}_t$ for some $\widetilde{w} \in A_0$ and $t \in [0,R]$. Then
\[
I(w) = t\, I(\widetilde{w}) = t, \qquad J(w) = t\, J(\widetilde{w}) = \frac{I(w)}{\Psi(\widetilde{w})}.
\]
Since $A_0 \subset \M^+ \setminus \Psi_{\underline{\lambda}}$ and hence $\Psi(\widetilde{w}) < \underline{\lambda} \le \lambda$, this gives $I(w) \le \lambda J(w)$, so
\[
E(w) \le \mu K(w) - \int_\Omega \left(\frac{1}{p^\ast}\, |u|^{p^\ast} + \frac{1}{q^\ast}\, |v|^{q^\ast}\right) dx.
\]
Since
\[
K(w) \le \frac{\vol{\Omega}^{1/\eta}}{c + d} \left(\int_\Omega |u|^{p^\ast} dx\right)^{c/p^\ast}\! \left(\int_\Omega |v|^{q^\ast} dx\right)^{d/q^\ast}
\]
by the H\"{o}lder inequality, this in turn gives
\[
E(w) \le \frac{\mu \vol{\Omega}^{1/\eta}}{c + d}\, \sigma^{c/p^\ast} \tau^{d/q^\ast} - \frac{\sigma}{p^\ast} - \frac{\tau}{q^\ast},
\]
where
\[
\sigma = \int_\Omega |u|^{p^\ast} dx, \qquad \tau = \int_\Omega |v|^{q^\ast} dx.
\]
Maximizing the right-hand side over all $\sigma, \tau \ge 0$ gives
\[
\sup_{w \in X}\, E(w) \le \frac{\vol{\Omega}}{\eta} \left(\frac{c^{c/p^\ast} d^{d/q^\ast}}{c + d}\, \mu\right)^\eta < \frac{1}{N}\, \min \set{S_p^{N/p},S_q^{N/q}}
\]
by \eqref{112}. It now follows from Corollary \ref{Corollary 306} that system \eqref{109} has $k$ distinct pairs of solutions $\pm w^{\lambda,\,\mu}_j,\, j = 1,\dots,k$ such that
\begin{equation} \label{510}
0 < \inf_{w \in \M_\rho}\, E(w) \le E(w^{\lambda,\,\mu}_j) \le \sup_{w \in X}\, E(w) \le \frac{\vol{\Omega}}{\eta} \left(\frac{c^{c/p^\ast} d^{d/q^\ast}}{c + d}\, \mu\right)^\eta
\end{equation}
for each $j$.

It remains to show that $w^{\lambda,\,\mu}_j \to 0$ as $\mu \searrow 0$. Let $w^{\lambda,\,\mu}_j = (u^{\lambda,\,\mu}_j,v^{\lambda,\,\mu}_j)$. By \eqref{510},
\begin{multline} \label{511}
E(w^{\lambda,\,\mu}_j) = \int_\Omega \left(\frac{1}{p}\, |\nabla u^{\lambda,\,\mu}_j|^p + \frac{1}{q}\, |\nabla v^{\lambda,\,\mu}_j|^q\right) dx - \frac{\lambda}{a + b} \int_\Omega |u^{\lambda,\,\mu}_j|^a\, |v^{\lambda,\,\mu}_j|^b\, dx\\[7.5pt]
+ \frac{\mu}{c + d} \int_\Omega |u^{\lambda,\,\mu}_j|^c\, |v^{\lambda,\,\mu}_j|^d\, dx - \int_\Omega \left(\frac{1}{p^\ast}\, |u^{\lambda,\,\mu}_j|^{p^\ast} + \frac{1}{q^\ast}\, |v^{\lambda,\,\mu}_j|^{q^\ast}\right) dx = \o(1)
\end{multline}
as $\mu \searrow 0$. Since $w^{\lambda,\,\mu}_j$ is a critical point of $E$,
\begin{multline} \label{512}
E'(w^{\lambda,\,\mu}_j)\, (u^{\lambda,\,\mu}_j,0) = \int_\Omega |\nabla u^{\lambda,\,\mu}_j|^p\, dx - \frac{\lambda a}{a + b} \int_\Omega |u^{\lambda,\,\mu}_j|^a\, |v^{\lambda,\,\mu}_j|^b\, dx\\[7.5pt]
+ \frac{\mu c}{c + d} \int_\Omega |u^{\lambda,\,\mu}_j|^c\, |v^{\lambda,\,\mu}_j|^d\, dx - \int_\Omega |u^{\lambda,\,\mu}_j|^{p^\ast} dx = 0
\end{multline}
and
\begin{multline} \label{513}
E'(w^{\lambda,\,\mu}_j)\, (0,v^{\lambda,\,\mu}_j) = \int_\Omega |\nabla v^{\lambda,\,\mu}_j|^q\, dx - \frac{\lambda b}{a + b} \int_\Omega |u^{\lambda,\,\mu}_j|^a\, |v^{\lambda,\,\mu}_j|^b\, dx\\[7.5pt]
+ \frac{\mu d}{c + d} \int_\Omega |u^{\lambda,\,\mu}_j|^c\, |v^{\lambda,\,\mu}_j|^d\, dx - \int_\Omega |v^{\lambda,\,\mu}_j|^{q^\ast} dx = 0.
\end{multline}
Dividing \eqref{512} and \eqref{513} by $p$ and $q$, respectively, and subtracting from \eqref{511} gives
\[
\frac{\mu\, (1 - \kappa)}{c + d} \int_\Omega |u^{\lambda,\,\mu}_j|^c\, |v^{\lambda,\,\mu}_j|^d\, dx + \frac{1}{N} \int_\Omega \left(|u^{\lambda,\,\mu}_j|^{p^\ast} + |v^{\lambda,\,\mu}_j|^{q^\ast}\right) dx = \o(1)
\]
in view of \eqref{110} and \eqref{111}. Since $\mu > 0$ and $\kappa < 1$, this implies that
\[
\int_\Omega |u^{\lambda,\,\mu}_j|^c\, |v^{\lambda,\,\mu}_j|^d\, dx = \o(1), \qquad \int_\Omega |u^{\lambda,\,\mu}_j|^{p^\ast} dx = \o(1), \qquad \int_\Omega |v^{\lambda,\,\mu}_j|^{q^\ast} dx = \o(1).
\]
Since
\[
\int_\Omega |u^{\lambda,\,\mu}_j|^a\, |v^{\lambda,\,\mu}_j|^b\, dx \le \left(\int_\Omega |u^{\lambda,\,\mu}_j|^p\, dx\right)^{a/p}\! \left(\int_\Omega |v^{\lambda,\,\mu}_j|^q\, dx\right)^{b/q}
\]
by the H\"{o}lder inequality, then
\[
\int_\Omega |u^{\lambda,\,\mu}_j|^a\, |v^{\lambda,\,\mu}_j|^b\, dx = \o(1).
\]
The desired conclusion now follows from \eqref{511}.

\def\cprime{$''$}

Data availability statement: The manuscript has no associated data.

\end{document}